\newtheorem{theorem}{Theorem}[section]
\newtheorem{lemma}[theorem]{Lemma}
\newtheorem{proposition}[theorem]{Proposition}
\newtheorem{remark}[theorem]{Remark}
\newcommand{\mfr}[1]{\mathfrak{#1}}
\newcommand{\mr}[1]{\mathrm{#1}}
\newcommand{\ZZ}{\mathbb{Z}}
\newcommand{\FF}{\mathbb{F}}
\newcommand{\QQ}{\mathbb{Q}}
\newcommand{\A}{\mathbb{A}}
\newcommand{\PP}{\mathbb{P}}
\newcommand{\C}{\mathbb{C}}
\newcommand{\bv}{{\bf{v}}}
\newcommand{\Proj}{\mathbb{P}}
\newcommand{\alt}{\mathrm{alt}}
\newcommand{\Ker}{\mathrm{Ker}}
\newcommand{\Sym}{\mathrm{Sym}}
\newcommand{\Char}{\mathop{\mathrm{Char}}}
\newcommand{\disc}{\mathrm{disc}}
\begin{document}
\title{Genus 3  curves whose Jacobians have endomorphisms by  $\QQ (\zeta _7 +\bar{\zeta}_7 )$, II }

\author{J. W. Hoffman}

\address{Department of Mathematics \\
              Louisiana State University \\
              Baton Rouge, Louisiana 70803}

\author{Dun  Liang}
\address{Department of Mathematics \\
              Louisiana State University \\
              Baton Rouge, Louisiana 70803}

 \author{Zhibin Liang}
\address{School of Mathematical Sciences \\
Capital Normal  University \\
Beijing, China, 100048\\
and Beijing International Center for Mathematical Research,\\
 Peking University
}

\author{Ryotaro Okazaki}
\address{Department of Knowledge Engineering and Computer Sciences, Doshisha University, Kyoto-Fu, 610-03 JAPAN}

 \author{Yukiko Sakai}
\address{Department of Mathematics,
College of Liberal Arts and Sciences,
Kitasato University,
Kanagawa 252-0373
JAPAN}

 \author{Haohao Wang}
\address{Department of Mathematics \\
Southeast Missouri State University \\
Cape Girardeau, MO 63701}

\email{hoffman@math.lsu.edu, dliang1@lsu.edu, liangzhb@gmail.com}
 \email{rokazaki@mail.doshisha.ac.jp, y-sakai@kitasato-u.ac.jp,  hwang@semo.edu}

\subjclass[2000]{Primary: 11G10, 11G15, 14H45} \keywords{curves of
genus three, real multiplication, abelian variety}

\begin{abstract}
In this work we consider constructions of genus three curves $X$
such that $\mr{End}(\mr{Jac} (X))\otimes \QQ$ contains the totally real
cubic number field $\QQ (\zeta _7 +\bar{\zeta}_7 )$. We construct explicit three-dimensional
families whose generic member is a nonhyperelliptic genus 3 curve with this
property. The case  when $X$ is hyperelliptic was
studied in \cite{HW}, and some nonhyperelliptic curves were constructed in
\cite{HLSW}.
\end{abstract}

\maketitle

\section{Introduction}
\label{S:intro}

Let $\mfr{M}_g$ be the (coarse) moduli space of projective smooth
curves of genus $g$, and let $\mfr{A}_g$ be the moduli space of principally polarized abelian varieties of dimension $g$.  The Torelli map $X \mapsto \mr{Jac}(X): \mfr{M}_g \to \mfr{A}_g$ is an injection. The study of subvarieties of
$\mfr{A}_g$ defined by the condition that the corresponding abelian varieties $A$ shall have nontrivial endomorphism
rings is an old subject which in its modern form is a part of the theory of Shimura varieties
(\cite{gS}, \cite{gS2}). For this paper we consider varieties
over fields of characteristic 0, so nontrivial means that  $\mr{End}(A)\otimes \QQ$ is larger than $\QQ$. In general,
$\mr{End}(A)\otimes \QQ$ is a semisimple algebra of finite dimension with involution  (see \cite[\S X]{aW} and
\cite{dM}).

When $g=2$ this study was initiated by Humbert in the late 19th century, who investigated the algebraic surfaces
 $\mathcal{H}_R$ parametrizing $(X, \theta)$, where $X$ is a smooth projective curve of genus 2, and
 $\theta$ is an embedding of an order  $R $ in a real quadratic field $\QQ (\sqrt{d})$ into
 $\mr{End}(\mr {Jac}(X))$.  Note that $\mfr{M}_2$ and $\mfr{A}_2$ are birationally equivalent, and that every genus two
 curve is hyperelliptic, so representable by an equation $y^2 = f(x)$ with a polynomial $f$ of degree 5 or 6. Humbert
 accomplished two things: he gave conditions on the analytic moduli $\tau \in \mathfrak{H}_2$ in the Siegel space
 of degree 2 for an abelian variety to have endomorphisms by $R$, and he managed in several cases to give explicit conditions
 on the coefficients of $f$ for the Jacobian of this curve to have endomorphisms by $R$, in effect describing   $\mathcal{H}_R$
 concretely along with its universal family of curves. On the second point: these constructions are related to the classical Poncelet theorems about pairs of conics, and were reinterpreted and generalized in the language of elliptic curves by Mestre (see \cite{jfM}, \cite{jfM2}, and also Sakai's thesis \cite{yS}). These
 Humbert surfaces are special instances of Hilbert modular surfaces (see \cite{vdG}). They play an important role in the geometry
 of the Siegel modular threefolds $\mfr{A}_2 (N) \to \mfr{A}_2$ that are coverings defined by level $N$ structures (see, e.g.,
 \cite{HH}). The problem of explicitly describing abelian varieties of dimension 2 with special endomorphism rings has been considered by many people. For instance see the work of Runge (\cite{bR}), implemented as Magma algorithms by Gruenewald
 (\cite{dG}). Another construction, based on an idea of Dolgachev, which utilizes the theory of $K3$ surfaces, has been made
 into an algorithm
 in the thesis of A. Kumar, and many explicit examples are now known: see the paper of Elkies and Kumar (\cite{EK}).
 For the curves of genus 2 whose endomorphsm algebra contains an order $R$ in a quaternion algebra over $\QQ$, the
 corresponding variety
  $\mathcal{H}_R$ is now a Shimura curve. For explicit computations of these, see Elkies (\cite{nE1}, \cite{nE2}, \cite{nE3}), Hashimoto and Murabayashii (\cite{HM}),
  and Voight (\cite{jV1}, \cite{jV2}, who with D. Kohel implemented much of this in Magma).

 By contrast to the situation in genus 2, the case of genus 3 and higher is much less understood. While it is easy
 to see that the  general member of $\mfr{A}_g$ has trivial endomorphism algebra, i.e.,  $\mr{End}(A)\otimes \QQ=\QQ$,
 it is true but not easy to show that the Jacobian of a  general member of  $\mfr{M}_g$ has trivial endomorphism algebra.
 This is a theorem of Severi and Zariski. When $g=3$, both  $\mfr{A}_3$ and  $\mfr{M}_3$ are birationally equivalent, but now not
 every curve is hyperelliptic:  $\mfr{M}_3 ^{\mr{hyper}}\subset \mfr{M}_3$ is a 5-dimensional
irreducible subvariety of the 6-dimensional  $\mfr{M}_3$. In this paper we give explicit constructions of families of
nonhyperelliptic curves of genus 3 whose Jacobians have endomorphism algebras containing the maximal order
$R = \ZZ[\zeta _7 ^+]$  in the totally
real cubic number field $\QQ (\zeta _7 ^+) := \QQ (\zeta _7 +\bar{\zeta}_7 )$, where $\zeta _7$ is a primitive 7th root of unity.
In earlier papers (\cite{HW}, \cite{HLSW}) we studied the hyperelliptic case, and gave a special construction generalizing
Mestre's method. In this paper we give families with maximal modular dimension. That is, the corresponding moduli space
$\mathcal{H}_R$ is now a Hilbert modular variety of dimension 3, so our family has three independent moduli.

Our method is based on Ellenberg's paper (\cite{JE}), which apparently is based on an earlier paper of Shimada (\cite{Shi}).  Many of the calculations in this paper were carried out with Mathematica (\cite{Math}), Magma (\cite{Magma}), PARI/GP (\cite{pari}) and Sage (\cite{Sage}).

Outline: In section 2 we recall well-known facts about branched coverings of Riemann surfaces. In section 3 we give the main construction. 
Section 4 is devoted to justifying the plane model of our curves stated in Proposition \ref{P:3}. This depends on analyzing the action of the 
dihedral group $D_7$ on the cohomology  of the curve $X$ of genus 8 that covers  our genus 3 curve. In section 5 we show that the curves in our family 
have three independent moduli. Section 6 constructs a $D_7$-invariant principal polarization on the Hodge structure associated to the genus 8 curve. 
Section 7 gives a precise analysis of the conditions under which our constructions give genus 3 curves of the type we are considering. Finally 
explicit equations for the genus 3 curves are in section 8.

\noindent {\bf Acknowledgements:}
{We would like to thank Dr. J. F. Mestre for his helpful comments during his visit to Beijing in summer 2012.  This work was
conducted during an invited academic visit to Beijing International Center for Mathematical Research and the Chinese Academy
of Science, and we would like to thank the hosting institutions, in particular Zhibin Liang and Lihong Zhi, for the invitations, and the hospitality during the visit.  The first author is supported in part by NSA grant 115-60-5012 and NSF grant OISE-1318015; the second author is supported by NSFC11001183 and NSFC1171231, and the last author would like to thank the GRFC grant from Southeast Missouri State University.}

\section{Branched coverings}
\label{S:br}
This section recalls the facts and defines our conventions.
%\subsection{\rm{\!\!}} \label{SS:br1}
If $X$ is a ``good'' connected topological space, and $\ast\in X$ is a point, we let
$\pi = \pi _1 (X,\ast)$ be the fundamental group. We follow the convention that composition
$\alpha \beta$ of elements $\alpha, \beta \in \pi _1 (X,\ast) $ represented by loops
$a, b: [0, 1] \to X$ is the loop $ab$ with $a$ following $b$.
We let $\tilde{X}$ be the universal covering. This is
a left $\pi$-torsor (i.e., $\pi$ acts on the left making it into a locally trivial  $\pi$-bundle).  Our convention is that
groups $G$ act on the left of spaces $X$. This has the inconvenience that $G$ acts on the right on spaces
of functions $\varphi: X \to \C$ via $(\varphi g)(x) := \varphi (gx)$. Recall the equivalence of categories:
\[
\left \{ \text{right } \pi \text{-sets} \  S\right \} \leftrightarrow \left \{ \text{unramified coverings\ } f: Y \to X\right \} .
\]
The arrow $\leftarrow$ sends $f: Y \to X $ to $S = f^{-1}(\ast)$ with the right monodromy action of
$\pi _1 (X,\ast)$. The arrow $\rightarrow$ sends $S$ to $S\times \tilde{X}/\sim$ where the equivalence relation
is $(s \gamma , x) \sim (s, \gamma x)$ for all $\gamma \in \pi$. Under this equivalence, transitive $\pi$-sets $S$
correspond to connected coverings $Y$. For a transitive set,  choosing any point $s \in S$, the map
$\pi_s \gamma \mapsto s \gamma :\pi _s\backslash \pi  \to S$ is a bijection of right $\pi$-sets, where
$\pi _s = \mr{Stab}_{\pi}(s)$ is the stabilizer of $s$. Also, automorphisms of coverings $ f: Y \to X$ correspond to
automorphisms of the $\pi$-set $ f^{-1}(\ast)$ (i.e., bijections commuting with the given $\pi$-action
on the latter).

%\subsection{\rm{\!\!}} \label{SS:br2}
Recall that a connected covering $ f: Y \to X$ is Galois with group $G$ if equivalently
\begin{itemize}
\item[1.] $\pi _s$ is a normal subgroup of $\pi$ for all $s\in S$.
\item[2.] $\mr{Aut} (Y/X) = G$ and the degree of $Y \to X$ is $\# G$.
\end{itemize}
Necessarily $G \cong \pi _s\backslash \pi  = \pi/\pi _s$. Under the bijection
$S = \pi _s\backslash \pi$, the automorphisms of $G $ correspond to the left multiplications
of $S = \pi _s\backslash \pi = \pi/\pi _s$ by elements of $\pi$ and these commute with the right
multiplications that define the $\pi$-set $\pi _s\backslash \pi$. The action of $G$ on $Y$ is in the left.
Here is an alternative: we start with a surjective homomorphism: $\varphi : \pi \to G$. Then this
defines a Galois $G$ covering whose right $\pi$-set is $S = G = \pi /\rm{Ker}(\varphi)$, and whose
$\pi$-set automorphisms are the left multiplications by elements of $\pi$. The stabilizers $\pi _s = \rm{Ker}(\varphi)$.

\begin{lemma}
\label{L:1}
Given two surjective homomorphisms  $\varphi _1, \varphi _2 : \pi \to G$, the corresponding right
$\pi$-sets are  isomorphic if and only if there is a group-automorphism $\psi$ of $G$ such that
$\varphi _2 = \psi \varphi _1 $. The set of isomorphisms are the maps
$x \to g \psi (x)$ for some element $g \in G$.
\end{lemma}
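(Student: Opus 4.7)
The plan is to unpack the definitions and pin down the free parameter $g$. With the conventions in the excerpt, the right $\pi$-set associated to $\varphi_i : \pi \to G$ is $S_i = G$ (with stabilizer $\ker \varphi_i$) equipped with the action $s \cdot \gamma := s\, \varphi_i(\gamma)$. A $\pi$-set map $f : S_1 \to S_2$ is then a function $f : G \to G$ satisfying
\[
f\bigl( s\, \varphi_1(\gamma) \bigr) = f(s)\, \varphi_2(\gamma) \quad \text{for all } s \in G,\ \gamma \in \pi.
\]

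For the forward direction, assume $f$ is such an isomorphism. Set $g := f(1)$ and define $\psi : G \to G$ by $\psi(x) := g^{-1} f(x)$. Plugging $s = 1$ into the equivariance identity yields $f(\varphi_1(\gamma)) = g\, \varphi_2(\gamma)$, equivalently $\psi \circ \varphi_1 = \varphi_2$. Feeding the equivariance back with general $s$ and writing $y = \varphi_1(\gamma)$ gives $\psi(sy) = \psi(s)\psi(y)$ for all $s \in G$ and all $y$ in the image of $\varphi_1$; since $\varphi_1$ is surjective this holds for all $y \in G$, so $\psi$ is a homomorphism. It is a bijection because $f$ is, hence $\psi \in \mr{Aut}(G)$.

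For the converse, given $\psi \in \mr{Aut}(G)$ with $\varphi_2 = \psi \varphi_1$ and any $g \in G$, the map $f(x) := g\,\psi(x)$ is clearly a bijection of $G$, and a direct check using multiplicativity of $\psi$ shows $f(s\,\varphi_1(\gamma)) = g\,\psi(s)\,\psi(\varphi_1(\gamma)) = f(s)\,\varphi_2(\gamma)$, so $f$ is a $\pi$-set isomorphism.

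Finally, to justify the parametrization claim: the relation $\varphi_2 = \psi\,\varphi_1$ determines $\psi$ on the image of $\varphi_1$, which is all of $G$ by surjectivity, so $\psi$ is intrinsic to the pair $(\varphi_1,\varphi_2)$. Every isomorphism $f$ therefore takes the form $f(x) = g\,\psi(x)$ with the single free parameter $g = f(1) \in G$, which ranges independently over $G$ by the converse direction. The only delicate point is to invoke surjectivity of $\varphi_1$ at the right moments in order to promote identities known on $\mr{Image}(\varphi_1)$ to identities on all of $G$; there is no serious obstacle.
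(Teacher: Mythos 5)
Your proof is correct and follows essentially the same route as the paper's: both arguments normalize the $\pi$-set isomorphism by a left translation so that it fixes the identity (you do this explicitly via $\psi(x)=g^{-1}f(x)$, the paper by a ``without loss of generality''), read off $\varphi_2=\psi\varphi_1$ by evaluating at the identity, and use surjectivity of $\varphi_1$ to upgrade the resulting identity to multiplicativity of $\psi$ on all of $G$. Your write-up is somewhat more explicit about the uniqueness of $\psi$ and the parametrization of the isomorphisms by $g=f(1)$, but there is no substantive difference in method.
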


\begin{proof}
The last statement follows from the first because the set of automorphism of each of these $\pi$-sets is
given by left multiplications by elements of $G$. An isomorphism of $G$-sets is a bijection
$\psi : G \to G$ with the property that $\psi(g)\varphi _2(x) = \psi  (g \varphi _1(x))$ for all $g \in G, x \in \pi$.
Because the $\varphi $ are surjective, we can write $g = \varphi _1 (\gamma)$ for some
$\gamma \in \pi$.
If $\psi$ is a homomorphism, and $\varphi _2 = \psi \varphi _1 $, then
 $$\psi(g)\varphi _2(x) = \psi(\varphi _1 (\gamma))\varphi _2(x)
 =    \psi(\varphi _1 (\gamma))    \psi(\varphi _1 (x) )  = \psi (\varphi _1 (\gamma)\varphi _1 (x) )
 =  \psi  (g \varphi _1(x)).$$
 On the other hand, left multiplication by $G$ is a transitive action on these sets, so without loss of
 generality, we may assume that $e = \varphi (e)$ : $e$ the identity element of $G$. Then the equation
  $\psi(g)\varphi _2(x) = \psi  (g \varphi _1(x))$ with $g=e$ gives
  $\varphi _2 (\gamma)= \psi \varphi _1 (\gamma)$ for all $\gamma \in \pi$, and writing
  $u = \varphi _1(\gamma), \ v = \varphi _1 (\delta)$, we get
  \[
  \psi(u v) = \psi ( \varphi _1(\gamma)\varphi _1 (\delta)) =  \psi ( \varphi _1(\gamma \delta))
  =  \varphi _2(\gamma \delta) =  \varphi _2(\gamma)\varphi _2(\delta)
  =  \psi \varphi _1(\gamma)\psi \varphi _1(\delta) = \psi(u)\psi(v).
  \]
 \end{proof}

 The special case of interest to us is when $X$ is a the Riemann surface of the set of $\C$-points of an algebraic
 curve. In this case, if $\overline{X}$ is the canonical compactification of $X$, then the unramified coverings of $X$
 correspond uniquely to coverings $f: \overline{Y} \to \overline{X}$ branched over the points $B = \overline{X}- X$. The
 branching type at each point $b \in B$ is determined by the action of a generator of $\pi$ which ``goes once around $b$''.
 For instance, if $f$ is a Galois $G$-covering, then each $f^{-1}(b)$ can be identified to a coset $G/H$ for a subgroup
 $H$, the inertia at $b$, well-defined up to conjugacy. A loop around $b$ determines an element of $G$ whose action on the
 coset $G/H$ determines the branching type above $b$.

%\subsection{\rm{\!\!}} \label{SS:br3}
We apply this to the situation: $G = D_7 = \langle s, t \mid s^7 = t^2 = 1, s t = t s^6\rangle$, the dihedral group
with 14 elements. $X = \Proj ^1 (\C)- B$, where $B$ is a set of 6 points, thus
\[
\pi = \pi _1 (X, \ast) = \langle \gamma _1, ..., \gamma _6 \mid \prod _{i=1}^{6} \gamma _i = 1\rangle .
\]
We are interested in surjective homomorphisms $\rho :\pi \to D_7$ such that each $\rho(\gamma _i) $ is a nontrivial
involution (i.e., element of order 2). This corresponds to Galois $D_7$-coverings $Z \to \Proj ^1 (\C)$ branched
only above the six points in $B$ and with branching scheme $2,2,2,2,2,2,2$ above each point. Thus each
$\rho(\gamma _i) = s^{a_i}t $ for an integer $a_i$ modulo 7, and it is easy to see that $\rho$ is surjective if and
only if at least two of the $a_i$ are distinct. The condition $ \prod _{i=1}^{6} \gamma _i = 1$ translates into
$a_1 -a_2+a_3-a_4+a_5-a_6 =0$ modulo 7. By lemma (\ref{L:1}), two such $\rho$'s will define isomorphic coverings
if and only if they differ by an automorphism of $D_7$. The group $\mr{Aut}(D_7)$ has order 42, generated by the
substitutions
\[
(s, t)\mapsto (s^3, t), \quad (s, t)\mapsto (s, st).
\]
Modulo these automorphisms, the vector $(a_1, ..., a_6)$ is equivalent to any other
$b_1, ..., b_6$ where $b_i = c a_i + d$ where $c \in \FF _7 ^{\ast}$, $d \in \FF _7 $. We see that
the branched $D_7$-coverings we are considering are in one-to-one correspondence with the
set $(0, a_2, ..., a_6)$ modulo  7, with $(a_2, ..., a_6)\neq (0, ..., 0)$ such that $a_2-a_3+a_4-a_5+a_6 = 0$, all modulo
scaling $(a_2, ..., a_6)\mapsto (c a_2, ..., c a_6)$, $c \in \FF _7 ^{\ast}$, in other words to the hypersurface
in $\Proj ^4 (\FF _7)$ defined by this linear equation. We have shown:

\begin{proposition}
\label{P:1}
The isomorphism classes of Galois  $D_7$-coverings $Z$ of $\Proj ^1(\C)$ branched above
a set B of six given points with branching scheme $2,2,2,2,2,2,2$ above each branch point  is in a
noncanonical one-to-one correspondence with  $\Proj ^3 (\FF _7)$.
\end{proposition}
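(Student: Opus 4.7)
By Lemma~\ref{L:1}, the isomorphism classes in question correspond bijectively to surjective homomorphisms $\rho: \pi \to D_7$ modulo post-composition with $\mr{Aut}(D_7)$, subject to the inertia constraint that each $\rho(\gamma_i)$ has order $2$. I would therefore classify such $\rho$ concretely in terms of numerical data and then compute the resulting orbit space.

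The involutions in $D_7$ are exactly the elements $s^a t$ for $a \in \FF_7$, so one can write $\rho(\gamma_i) = s^{a_i} t$ with $a_i \in \FF_7$. Using the dihedral identity $(s^a t)(s^b t) = s^{a-b}$, which follows at once from $ts = s^{-1} t$, the product $\rho(\gamma_1) \cdots \rho(\gamma_6)$ collapses to $s^{a_1 - a_2 + a_3 - a_4 + a_5 - a_6}$, so the sole relation $\prod \gamma_i = 1$ translates into the linear equation $a_1 - a_2 + a_3 - a_4 + a_5 - a_6 \equiv 0 \pmod 7$. Surjectivity of $\rho$ amounts to requiring at least two of the $a_i$ to differ, for otherwise the image is contained in the order-$2$ subgroup $\langle s^{a_1} t \rangle$.

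Next I would identify $\mr{Aut}(D_7)$ explicitly: since $\langle s \rangle$ is characteristic (being the unique Sylow $7$-subgroup), every automorphism has the form $\psi(s) = s^c$, $\psi(t) = s^d t$ with $c \in \FF_7^\ast$ and $d \in \FF_7$, and it acts on our parametrization by the affine transformation $a_i \mapsto c a_i + d$. I would use the translation freedom to normalize $a_1 = 0$; this consumes all of $\FF_7$ and leaves only the residual action by $\FF_7^\ast$-scalings. After normalization, $(a_2,\ldots,a_6)$ lies in the hyperplane $a_2 - a_3 + a_4 - a_5 + a_6 \equiv 0 \pmod 7$ of $\FF_7^5$, is nonzero by surjectivity, and is to be taken modulo $\FF_7^\ast$-scaling; the quotient is precisely $\Proj^3(\FF_7)$.

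The step requiring the most care is the dihedral expansion leading to the alternating sign pattern, since each application of $ts = s^{-1}t$ inverts the exponent accumulated so far, and the signs must be tracked correctly to obtain the right hyperplane equation; the rest reduces to linear algebra over $\FF_7$. The correspondence is noncanonical because the normalization $a_1 = 0$ depends on the chosen ordering of the branch points and on a choice of splitting $\mr{Aut}(D_7) \cong \FF_7 \rtimes \FF_7^\ast$.
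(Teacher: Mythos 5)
Your proposal is correct and follows essentially the same route as the paper: parametrize the surjections by $\rho(\gamma_i)=s^{a_i}t$, derive the alternating relation $a_1-a_2+\cdots-a_6\equiv 0 \pmod 7$ from the dihedral identity, quotient by $\mr{Aut}(D_7)$ acting as $a_i\mapsto ca_i+d$, and normalize $a_1=0$ to land on a hyperplane in $\FF_7^5$ modulo scaling, i.e.\ $\Proj^3(\FF_7)$. The only difference is that you spell out a few steps the paper leaves implicit (the characteristic Sylow $7$-subgroup argument for $\mr{Aut}(D_7)$ and the sign bookkeeping), which is harmless.
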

We can see this correspondence another way. These coverings have canonical structures as Riemann surfaces, and thus
as algebraic curves over $\C$. The quotient curve $Z/t = C$ has genus 2 and the projection $Z \to C$ is an unramified
cyclic covering of degree 7.  In fact, $C$ is the double covering of $\Proj ^1 (\C)$ branched above the 6 points in $B$.
By the geometric form of class field theory,  a cyclic degree 7 covering corresponds to a
subgroup of the Jacobian $H \subset \mr{Jac}(C)$ of order 7. Namely, since Jacobians are principally polarized, they are
self-dual. The dual of the isogeny $ \mr{Jac}(C) \to  \mr{Jac}(C)/H$ is a cyclic isogeny $A \to  \mr{Jac}(C)$ of degree 7, and
we get the covering $Z$ by pulling this isogeny back along the canonical embedding $C \to  \mr{Jac}(C)$. This curve has a
$D_7$-action:  we can choose the embedding $C \to  \mr{Jac}(C)$ so that  the map $P\mapsto -P$ on the Jacobian induces the hyperelliptic involution of $C$. Together $H$ and $P\mapsto -P$ define this $D_7$-action, since $H$ is also
preserved under $P\mapsto -P$. Since the
set of subgroups of order 7 is equal to the set of lines in $ \mr{Jac}(C)[7] $, the  points of order 7, and since this latter set is noncanonically
isomorphic to $\FF_7^4$, we get another parametrization of these coverings $Z$ by $\Proj ^3 (\FF _7)$. It would be interesting
to compare these two parametrizations.

\section{Construction of the curves.}
\label{S:constr}

%\subsection{\rm{\!\!}}\label{SS:constr1}
In constructing curves $Y$ of genus 3 with endomorphisms by
$\QQ(\zeta_7 ^+)$, we can consider the curve $X$ which gives the
correspondence defining the endomorphism.  This curve $X$ has
genus 8.  There is a covering $X \to Y$ of degree 2 with 6 branch
points.  In fact $X$ has an action of the dihedral group  $D_7$,
symmetries of a regular 7-gon.  This is  a special case of
Ellenberg's construction. $D_7$ is of order 14 generated by two
elements   $s$ and $t$  with $s^7=1$, $t^2=1$, and  $t s t = s^6$.
Here is a diagram of the subgroups and corresponding curves and
function fields. $t_i = s^i t$ is one of the 7 involutions. All the curves
$Y_i$ are isomorphic.

\[
\begin{tikzpicture}
  \node (P0) at (90:2.8cm) {$\{ e\}$};
  \node (P1) at (90+50: 2cm) {$\langle t _i  \rangle$} ;
  \node (P2) at (90+ 180: 2.5cm) {\makebox[5ex][r]{$D_7 $}};
  \node (P3) at (90+3*72:1.5cm) {\makebox[5ex][l]{$ \langle  s \rangle$}};

  \path[commutative diagrams/.cd, every arrow, every label]
  %\path[commutative diagrams/.cd]
    (P0) edge node[swap] {} (P1)
    (P1) edge node[swap] {} (P2)
    (P3) edge node{} (P2);
    \path[commutative diagrams/.cd, every arrow, every label]
     %\path[commutative diagrams/.cd]
     (P0) edge node {} (P3);
   \end{tikzpicture}\quad\quad
\begin{tikzpicture}
  \node (P0) at (90:2.8cm) {$k(x, y, \sqrt[7]{\varphi}) = k(X)$};
  \node (P1) at (90+50: 2cm) {$k(x, z)=k(Y)$} ;
  \node (P2) at (90+ 180: 2.5cm) {$ k(x) = k(\Proj^1)$};
  \node (P3) at (90+3*72:1.5cm) {$k(x, y) = k(C)$};

  \path[commutative diagrams/.cd, every arrow, every label]
   %\path[commutative diagrams/.cd]
    (P1) edge node[swap] {$$} (P0)
    (P2) edge node[swap] {$$} (P1)
    (P2) edge node{$$} (P3);
    \path[commutative diagrams/.cd, every arrow, every label]
     %\path[commutative diagrams/.cd]
     (P3) edge node {$$} (P0);
   \end{tikzpicture}   \quad\quad
  \begin{tikzpicture}
  \node (P0) at (90:2.8cm) {$X$};
  \node (P1) at (90+50: 2cm) {$Y = Y_i = X/\langle t_i \rangle$} ;
  \node (P2) at (90+ 180: 2.5cm) {\makebox[5ex][r]{$\Proj  ^{1} $}};
  \node (P3) at (90+3*72:1.5cm) {$C= X/\langle s \rangle$};

  \path[commutative diagrams/.cd, every arrow, every label]
   %\path[commutative diagrams/.cd]
    (P0) edge node[swap] {$h_i$} (P1)
    (P1) edge node[swap] {$v_i$} (P2)
    (P3) edge node{$x$} (P2);
    \path[commutative diagrams/.cd, every arrow, every label]
    % \path[commutative diagrams/.cd]
     (P0) edge node {$q$} (P3);
   \end{tikzpicture}
   \]

In this diagram:
\begin{itemize}
\item[1.] Genus of $X$ is 8.
\item [2.] The element $s \in D_7$ of order 7  acts fixed point free. $C = X/\langle s \rangle$, has genus 2, and is defined by an equation $y^2 = s(x)$ for a degree 6 polynomial with distinct roots.
$X \to C$ is unramified.
\item [3.] Any involution $t_i \in D_7$ has 6 fixed points on $X$. These lie above the six roots
of $s(x) = 0$. Each $Y_i = X/\langle t_i \rangle$ has genus 3.
\item[4.] Let $Y$ be any of the mutually isomorphic $Y_i $. The curve $X$ defines a correspondence of $Y$
which as an endomorphism of ${\rm Jac}(Y)$, satisfies the equation of $\zeta _7 ^+$, viz.,
$x^3+x^2 -2x -1 = 0$.
\end{itemize}

The element $\varphi (x) = a(x) + b(x)y \in k(C)$ is chosen in such a way that
the extension $k(X)/k(C)$, or equivalently the projection $X\to C$, is everywhere
unramified. Our approach to construct the curves $Y$ is to construct the curves $X$
with a $D_7$-action with certain properties. These will arise from a genus 2 curve
$C$ with an unramified cyclic 7-covering $X\to C$. Kummer theory (assuming $\zeta _7 \in k$) tell us that we may construct the 
extension of function fields  as $k(x,y,\sqrt[7]{\varphi})$. This will be unramified
if and only if ${\rm div} (\varphi) \in 7 {\rm Div}(C)$, i.e., every zero and pole has order a multiple of 7.
If $N : k(x, y)^* \to k(x)^*$ is the norm, this implies that $N(\varphi) \in 7{\rm Div}(\Proj ^1)$.
Clearly ${\rm deg (div}(N(\varphi)) = 0$, and every degree 0 divisor on $\Proj ^1$ is principal, so this shows
that
\begin{equation}
\label{E:constr1}
N(\varphi) = a(x)^2 - b(x)^2 y^2 = a(x)^2 - b(x)^2 s(x) = \lambda c(x)^7
\end{equation}
for some rational function $c(x)$ and constant $\lambda \in k$.

\begin{proposition}
\label{P:2}
Let $a, b, c \in k(x)$, $s \in k[x]$ of degree 6 with distinct roots, $\lambda \in k$.
Suppose that equation (\ref{E:constr1}) above is satisfied. Let $C$ be the genus 2 curve defined by
$y^2 = s(x)$. Assume that $\zeta _7 \in k$, and also that $\sqrt[7]{\lambda}\in k$.
\begin{itemize}
\item[1.] If $\varphi = a(x) + b(x) y$ is not a $7$th power in $k(x, y)$, then letting w be any root of
$Z^7 - \varphi $, the extension $k(x, y, w)/k(x)$ is Galois with group $D_7$.

\item[2.] If ${\rm div} (\varphi)$ is relatively prime to  ${\rm div} (\varphi ')$ where $\varphi ' =a(x)-b(x)y$, then
the extension $k(x, y, w)/k(x, y)$ is unramified. Thus if $X$ denotes the projective nonsingular model of
$k(x, y, w)$, then it satisfies the four properties listed after the diagram above.

\end{itemize}
\end{proposition}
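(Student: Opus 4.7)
The plan is to treat the two parts separately: part (1) is a Galois-theoretic calculation, and part (2) reduces to the standard ramification criterion for Kummer extensions.

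For (1), the tower $k(x)\subset k(x,y)\subset k(x,y,w)$ decomposes as follows. Since $s(x)$ is squarefree of degree $6$, the polynomial $Y^2-s(x)$ is irreducible over $k(x)$, so $k(x,y)/k(x)$ is Galois of degree $2$ with group generated by the hyperelliptic involution $\tau:y\mapsto -y$. Because $\zeta_7\in k$ and $\varphi$ is not a $7$th power, Kummer theory gives that $Z^7-\varphi$ is irreducible over $k(x,y)$, so $k(x,y,w)/k(x,y)$ is cyclic of degree $7$ with Galois group generated by $\sigma:w\mapsto\zeta_7 w$. To obtain Galois-ness of the full extension over $k(x)$, I would extend $\tau$ to an automorphism of $k(x,y,w)$; the key identity, using \eqref{E:constr1} and $\sqrt[7]{\lambda}\in k$, is
\[
\tau(\varphi)=\varphi'=\frac{\lambda c(x)^7}{\varphi}=\left(\frac{\lambda^{1/7}c(x)}{w}\right)^{7},
\]
so setting $\tau(w):=\lambda^{1/7}c(x)/w$ gives a consistent extension, and a short check shows $\tau^2=\mathrm{id}$. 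To identify the group as $D_7$, I would verify the dihedral relation by computing $\tau\sigma\tau(w)=\zeta_7^{-1}w=\sigma^{-1}(w)$.

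For (2), I would invoke the standard criterion: a Kummer extension $K(\sqrt[7]{u})/K$ with $\zeta_7\in K$ is unramified at a place $\mathfrak{p}$ of $K$ (of residue characteristic different from $7$) if and only if $\mathrm{ord}_\mathfrak{p}(u)\equiv 0\pmod 7$. Thus the task reduces to showing $\mathrm{ord}_\mathfrak{p}(\varphi)\equiv 0\pmod 7$ for every place $\mathfrak{p}$ of $k(C)=k(x,y)$. Taking divisors on $C$ in \eqref{E:constr1} gives
\[
\mathrm{div}(\varphi)+\mathrm{div}(\varphi')=7\,\mathrm{div}(c).
\]
The coprimality hypothesis on the supports of $\mathrm{div}(\varphi)$ and $\mathrm{div}(\varphi')$ then forces that at each place $\mathfrak{p}$, either $\mathrm{ord}_\mathfrak{p}(\varphi)=0$, or else $\mathrm{ord}_\mathfrak{p}(\varphi')=0$ and $\mathrm{ord}_\mathfrak{p}(\varphi)=7\,\mathrm{ord}_\mathfrak{p}(c)$; in both cases the order is divisible by $7$. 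The additional claim that $X$ satisfies the four properties listed after the diagram then follows quickly: the first two properties come from part (2) and Riemann--Hurwitz applied to the unramified cover $X\to C$ (which gives $g(X)=8$ from $2g(X)-2=7(2\cdot 2-2)$), and the third from Riemann--Hurwitz for $X\to Y_i$ together with the observation that the only ramification of $X\to\Proj^1$ arises from that of $C\to\Proj^1$.

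I do not expect either part to present a real obstacle. The only care needed is to read the divisor identity in \eqref{E:constr1} on $C$ rather than on $\Proj^1$ (so as to incorporate the branching above the Weierstrass points correctly), and to note that the various choices $\tau(w)=\zeta_7^j\lambda^{1/7}c/w$ for $j=0,\ldots,6$ all yield order-two extensions of $\tau$, so the ambiguity in extending $\tau$ does not affect the structure of the Galois group.
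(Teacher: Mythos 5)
Your proposal is correct and follows essentially the same route as the paper: the same lift $\tau(w)=\lambda^{1/7}c(x)/w$ of the hyperelliptic involution using $\varphi'=N(\varphi)/\varphi$, and the same divisor identity $\mathrm{div}_C(\varphi)+\mathrm{div}_C(\varphi')=7\,\mathrm{div}_C(c)$ combined with coprimality to get $7\mid\mathrm{ord}_{\mathfrak p}(\varphi)$ everywhere. The only item you leave untouched is the fourth listed property (the $\zeta_7^+$ endomorphism of $\mathrm{Jac}(Y)$), but the paper's own proof also explicitly defers that to a later section.
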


\begin{proof}
To see the first point, note that $sw = \zeta _7 w$, defines an automorphism $s$ of the field
$k(x, y, w)$ fixing $k(x, y)$, which generates the Galois group of the degree 7 extension
$k(x, y, w)/k(x, y)$. Kummer theory tells us that all such cyclic degree 7 extensions are gotten this way.
The issue is to see that the hyperelliptic involution $(x, y)\mapsto (x, -y)$ lifts to an involution $t$
of $k(x,y,w)$ such that $ts = s^{-1}t$. Define $t$ by the rule
$tx =x, ty = -y$ and $t w = \mu c(x)/w$ where $\mu \in k$ is a root of $\mu ^7 = \lambda$. Since
\[
\left (\frac{\mu c(x)}{w}\right )^7 = \frac{N(\varphi)}{\varphi} = \varphi ',
\]
we see that $z = \mu c(x)/w$ is a root of the equation $X^7 - \varphi ' = 0$. As is well-known, this means that we can
lift the automorphism  $(x, y)\mapsto (x, -y)$ to an automorphism of $k(x, y, w)$ by sending $w\mapsto z$. The identities
$t^2 = 1, ts = s^{-1}t$ are immediate.

On the second point: any point $P\in C$ where the order of zero or pole $v_P(\varphi)$ is not divisible by 7 will
give rise to branching in the covering $X \to C$ above $P$. If $v_P(\varphi)$ is divisible by 7, there will be no branching.
From the equation $\pi ^* \mr{div}_ {\Proj^1} (N(\varphi)) = \mr{div}_C(\varphi)+ \mr{div}_C(\varphi ')$,
where $\pi : C \to \Proj ^1$, we see that as long as $\mr{div}_C(\varphi)$ and $\mr{div}_C(\varphi ')$ are relatively
prime, we can conclude that $\mr{div}_C(\varphi) \in 7 \mr{Div}(C)$ from the hypothesis that
$N(\varphi) $ is the seventh power of a rational function i.e., $\mu c(x)$.

$X$ has genus 8 since it is an unramified covering of degree 7 of a genus 2 curve. The other properties
about $s$ and $t$ are easy to check. We will prove the statement about the endomorphism of the Jacobian later.

\end{proof}

%\subsection{\rm{\!\!}} \label{SS:constr2}
We will refine this result to show that we can define our curves by the above procedure by solving the equation
\begin{equation}
\label{E:basic}
a(x)^2 - s(x)b(x)^2 = c(x) ^7
\end{equation}
for polynomials $a, b, c, s$ of respective degrees $7, 4, 2, 6$. For simplicity, we ignore the constant $\lambda$, which can implicitly be absorbed
into the equation if our field $k$ is algebraically closed. The systematic study of this Diophantine equation appears in section \ref{S:polys}.
We consider genus 8 curves $X$ with an action of $D_7$ such that $X/D_7$ has genus 0, the generator $s$ has no fixed point on $X$ and
each involution $t$ has six fixed points on $X$. We will show:

\begin{proposition}
\label{P:3}
Let $(z, x, y)$ be coordinates in projective space $\PP^2$. We
let $D_7$ act on $\PP^2$ by the formulas:
\[    s(z)=z, \, \,  s(x) =
\zeta_7 x, \, \, s(y) = \zeta_7^{-1} y,  \quad t(z)=-z, \, \, t(x)
= y, \, \, t(y) =x. \]

A genus $8$ curve $X$ with a  $D_7$-action with the above properties
has an equation
in the shape:
\[ (x^{14} +y^{14}) +   \phi(xy, z^2) +z(x^7-y^7) \psi(xy, z^2)=0,\]
where  each term in   $\phi(xy, z^2)$   has total degree $14$, and
each term in $\psi(xy, z^2)$  has degree $6$. The above equation is invariant under $D_7$.
In affine coordinates $x = x/z, \, y=y/z$,
$D_7$ acts as
 \[      s(x) =
\zeta_7 x, \, \, s(y) = \zeta_7^{-1} y,  \quad  t(x) = -y,  \, \,
t(y) = -x. \] With an obvious change in notation, the equation
becomes
\[         (x ^{14} +y^{14}) + \phi(xy) +  (x^7-y^7)
\psi(xy) =0,\] where  $\phi(w)$ has degree $7$ and  $\psi(w)$  has
degree $3$.Thus a genus $8$ curve $X$ with a  $D_7$-action with the above properties has an affine plane model in the shape
\begin{equation*}
\label{E:D7}
\begin{matrix} u^2+v^2 +   \phi( w) + \tau \pi(w) =0, \\
\text{where $u = x^7, \, \,  v = y^7,   \, \, w
= -xy, \, \, \tau = u-v , \, \, \deg(\phi(w))=7, \, \,
\deg(\pi(w))=3$.}
\end{matrix}
\end{equation*}

\end{proposition}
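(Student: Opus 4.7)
The plan is to construct a $D_7$-equivariant birational map from $X$ to the prescribed $\PP^2$ using the functions provided by Proposition \ref{P:2}, and then determine the shape of the defining equation from the invariant theory of $D_7$ acting on $k[z, x, y]$.

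First I would exhibit the map. Writing $k(X) = k(X_0, Y_0, w)$ with $Y_0^2 = s(X_0)$ and $w^7 = a(X_0) + b(X_0) Y_0$ as in Proposition \ref{P:2}, set $z := Y_0$, $x := w$, and $y := t(w) = \mu c(X_0)/w$. Using $st = ts^{-1}$ together with the formulas $t(Y_0) = -Y_0$ and $t(w) = \mu c(X_0)/w$ from the proof of Proposition \ref{P:2}, one verifies directly that $(z, x, y)$ transforms under $D_7$ exactly as the prescribed projective coordinates. In the affine coordinates $\xi := x/z$, $\eta := y/z$, direct calculation yields $\xi \eta = \mu c(X_0)/s(X_0)$ and $\xi^7 - \eta^7 = 2 b(X_0)/s(X_0)^3$; from these relations together with the expression $\xi^7 = (a + b Y_0)/Y_0^7$ one solves for $X_0$, $Y_0$, and $w$ as rational functions of $\xi$ and $\eta$. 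Hence $k(\xi, \eta) = k(X)$, so the associated rational map $X \to \PP^2$ is birational onto an irreducible plane curve $Z$, whose defining polynomial $F \in k[z, x, y]$ is unique up to scalar.

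Since $Z$ is $D_7$-stable, $F$ is a $D_7$-semi-invariant: $g \cdot F = \chi(g) F$ for some character $\chi : D_7 \to k^*$. If $\chi$ is the sign character then $F$ vanishes on the $t$-fixed locus, which contains the entire line $x + y = 0$; this forces $(x+y) \mid F$ and thus $Z$ to contain this line, contradicting the irreducibility of $Z$ (which is birational to $X$ of genus $8$, hence not rational). Therefore $F$ is strictly $D_7$-invariant. Direct enumeration of $D_7$-invariant monomials $x^a y^b z^c$ in $k[z, x, y]$ (starting from the constraint $a \equiv b \pmod{7}$ for $s$-invariance, then symmetrizing under $t$) produces exactly three families of invariants: polynomials in $xy$ and $z^2$; multiples of $z(x^7 - y^7)$ by such polynomials; and multiples of $x^{7k} + y^{7k}$ by them. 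In degree $14$ these collapse to the three summands $\alpha(x^{14} + y^{14})$, $\phi(xy, z^2)$ with $\phi$ of total degree $14$ in $(x, y, z)$, and $z(x^7 - y^7)\psi(xy, z^2)$ with $\psi$ of total degree $6$, giving exactly the asserted form.

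It remains to show that $\deg F = 14$ and that the coefficient $\alpha$ of $x^{14} + y^{14}$ is nonzero. The degree is pinned down by computing the degree of the $D_7$-linearized linear system spanned by $z$, $x$, $y$ on $X$, which can be extracted from the pole orders of $Y_0$, $w$, and $t(w)$ together with the factorization $X \to C \to \PP^1$. For $\alpha \neq 0$: were $\alpha = 0$, the polynomial $F$ would lie in the subring of $k[z, x, y]$ generated by $xy$, $z^2$, and $z(x^7 - y^7)$, which produces plane models of strictly smaller degree and contradicts the birationality of the map. After rescaling one takes $\alpha = 1$, yielding the equation in the stated shape. The main obstacle I anticipate is the degree count itself, which requires careful bookkeeping of the $D_7$-action on the divisor classes of $X$ to pin the degree at exactly $14$.
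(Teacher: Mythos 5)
Your overall strategy (an equivariant plane model followed by the invariant theory of $D_7$ acting on $k[z,x,y]$) is reasonable, and the second half is essentially what the paper does: Proposition \ref{P:8} shows the trivial representation occurs with multiplicity $13$ in $\Sym^{14}(\alt\oplus\chi_1)$ and lists the same spanning set of invariants that your monomial enumeration produces. The problem is in the first half: the explicit coordinates $z=Y_0$, $x=w$, $y=\mu c(X_0)/w$ do transform correctly under $D_7$ and do give a birational map onto a plane curve, but that curve does not have degree $14$, so it cannot satisfy an equation of the stated shape. A polar-divisor computation shows this: on $X$ the function $w$ has polar divisor $q^*(\infty_++\infty_-)$ of degree $14$, $\mu c/w$ has polar degree $28$, and $Y_0$ has polar degree $42$; the supremum of these divisors has degree $56$ and the linear system $\langle Y_0,\,w,\,\mu c/w\rangle$ has no base points, so the image curve has degree $56$ once birationality is granted. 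Equivalently, your $\xi\eta=\mu c(X_0)/s(X_0)$ is a degree-$6$ function on $X/D_7$, whereas in the degree-$14$ model the product $xy$ must be the degree-$2$ function $w=m^2+a$ of Proposition \ref{P:g0}. So the step you flag as ``the main obstacle'' is not mere bookkeeping: with your choice of coordinates the degree count comes out wrong, and no argument can pin it at $14$.

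The missing idea is the one the paper supplies in Section \ref{S:reps}: the degree-$14$ model is the image of the canonical embedding $X\hookrightarrow\PP(H^0(\Omega_X))=\PP^7$ under an equivariant projection onto a subrepresentation isomorphic to $\alt\oplus\chi_1$, which exists because the Lefschetz fixed-point computation of Proposition \ref{P:7} identifies $H^0(\Omega_X)$ as $2\cdot\alt+\chi_1+\chi_2+\chi_3$. Since the canonical system has degree $2g-2=14$ and the projection is base-point free for a generic choice, the image automatically has degree $14$. Concretely the correct affine coordinates are ratios of differentials, e.g.\ $x/z=w/\ell(X_0)$ and $y/z=-\mu c(X_0)/\bigl(w\,\ell(X_0)\bigr)$ for a suitable linear polynomial $\ell$; your $w$ must be corrected by such a factor from $k(X_0,Y_0)$. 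Two smaller issues: recovering $X_0$ from $\xi\eta$ and $\xi^7-\eta^7$ requires a genericity argument you do not give, and the claim that vanishing of the coefficient of $x^{14}+y^{14}$ would force a smaller-degree model is not justified as stated (the paper instead controls this normalization through the adjoint/singularity analysis, i.e.\ the required distribution of the $70$ double points).
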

The proof of this proposition involves the following steps: We consider the canonical embedding
$$X \to \PP^7 = \PP(H^0 (\Omega _X)), $$
which is $D_7$-equivariant for a linear action of $D_7$ on the module of differentials $H^0 (\Omega _X)$. We determine
this action, see propositions \ref{P:7}, \ref{P:8} below. The planar equation in the above proposition is the image of the degree 14 canonical
curve under an equivariant projection $ \PP(H^0 (\Omega _X))\to \PP^2$ with the $D_7$-action in this proposition. The equation
displayed is the most general degree 14 polynomial invariant under this $D_7$-action. Note that further conditions on the coefficients
of $\phi, \psi$ must be satisfied in order that the displayed equation defines a genus 8 curve of the type we are considering. We examine these conditions
next. We consider curves defined by an equation (recall: $w = xy,   \, \,   u = x^7,   \, \,   v =  y^7,    \, \,  \tau =
u-v; \, \,  \Rightarrow   \, \, u^2+v^2 =\tau^2+2 w^7$)
$$f(x,y)=u^2+v^2+\phi(w) +\tau \psi(w) =g(\tau,  w)=0, \, \, \text{ where } \, \, g(\tau,  w) = \tau^2 +\tau \psi(w)  + ( \phi(w) +2 w^7). $$
Note that the expression on the right  quadratic in $\tau$.   The terms $w$ and $\tau$
are invariant under $D_7$. The polynomials $\phi, \psi$ have respective degrees 7, 3. We let $\FF$ be a field containing $\QQ$ and all the coefficients
of the polynomials $\phi, \psi$. Initially we treat these coefficients as independent variables, to be specialized  later. In the statements below, generic
assumptions are made so that the statements are meaningful, (e.g., that the equations are irreducible and hence define field extensions). This does not
involve a loss of generality for our purposes. Our goal is to define families of curves with general moduli. It will be justified later that we really do get
families of maximal modular dimension.

Let $C_7=\langle s \rangle$ be the subgroup generated by $s$,  and
let $I=\langle t \rangle$ be the subgroup generated by $t$. The
function field of the curve $X$ is $K = \FF(x, y)$ where $f(x, y)
= 0$. If $H$ is a subgroup of $D_7$, we let $K^H$ be the subfield
of elements if $K$ fixed by $H$. This is the function field of the
quotient curve $X/H$, note that $K/K^H$ is a Galois extension with
group $H$.

We also consider the fields $\FF(\tau, w)$ and $\FF(u, v, w) =
\FF(u, w) = \FF(v, w)$, which are subfields of $K$.

\begin{proposition}
\label{P:4}
\[ \FF(u, v, w) = K^{C_7},   \quad \text{and}  \quad  \FF(\tau, w) =
K^{D_7}.\]
\end{proposition}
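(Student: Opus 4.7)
The strategy is a degree-count argument: combine the easy inclusions $\FF(u,v,w) \subseteq K^{C_7}$ and $\FF(\tau, w) \subseteq K^{D_7}$ with upper bounds $[K:\FF(u,v,w)] \le 7$ and $[K:\FF(\tau, w)] \le 14$ matching the Galois degrees $|C_7|=7$ and $|D_7|=14$.

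\emph{Step 1: Verify the inclusions.} Using $s(x)=\zeta_7 x$, $s(y)=\zeta_7^{-1}y$, each of $u=x^7$, $v=y^7$, $w=-xy$ is visibly fixed by $s$, so $\FF(u,v,w)\subseteq K^{C_7}$. Using $t(x)=-y$, $t(y)=-x$, a direct calculation gives $t(u)=-v$, $t(v)=-u$, $t(w)=-(-y)(-x)=-xy=w$, so $t$ fixes both $w$ and $\tau=u-v$; hence $\FF(\tau,w)\subseteq K^{D_7}$.

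\emph{Step 2: Upper bound on $[K:\FF(u,v,w)]$.} From $w=-xy$ we solve $y=-w/x$, so $y\in\FF(u,v,w)(x)$ and therefore $K=\FF(x,y)=\FF(u,v,w)(x)$. Since $x$ is a root of $T^7-u$ over $\FF(u,v,w)$, we get $[K:\FF(u,v,w)]\le 7$.

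\emph{Step 3: Upper bound on $[K:\FF(\tau,w)]$.} Exactly as above, $K=\FF(\tau,w)(x)$. Substituting $y=-w/x$ into $\tau=x^7-y^7$ yields $\tau=x^7+w^7/x^7$, i.e.\ $x$ satisfies
\[
T^{14}-\tau\, T^7+w^7=0
\]
over $\FF(\tau,w)$. Hence $[K:\FF(\tau,w)]\le 14$.

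\emph{Step 4: Conclude by Galois theory.} Since $s$ acts without fixed points on $X$ and $t$ acts nontrivially, $D_7$ acts faithfully on the function field $K$, so $[K:K^{C_7}]=7$ and $[K:K^{D_7}]=14$. Combined with the inclusions of Step~1, the chains
\[
\FF(u,v,w)\subseteq K^{C_7}\subseteq K,\qquad \FF(\tau,w)\subseteq K^{D_7}\subseteq K
\]
give $[K:\FF(u,v,w)]\ge 7$ and $[K:\FF(\tau,w)]\ge 14$. Comparing with Steps 2--3 forces equality of degrees, and hence $\FF(u,v,w)=K^{C_7}$ and $\FF(\tau,w)=K^{D_7}$.

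The argument has essentially no hard step; the only observation worth flagging is that the relation $w=-xy$ lets us eliminate $y$ in favor of $x$ and $w$, turning what looks like a two-variable extension into a one-variable extension of the expected degree. Faithfulness of the $D_7$-action (needed for the Galois degrees) is already guaranteed by the geometric setup recorded after the diagram in Section~\ref{S:constr}.
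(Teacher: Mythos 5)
Your proof is correct and follows essentially the same degree-counting argument as the paper: the easy inclusions $\FF(u,v,w)\subseteq K^{C_7}$, $\FF(\tau,w)\subseteq K^{D_7}$ combined with the upper bounds $[K:\FF(u,v,w)]\le 7$ and $[K:\FF(\tau,w)]\le 14$, matched against the exact Galois degrees $7$ and $14$. The only cosmetic difference is that the paper gets the second bound by exhibiting $\FF(u,v,w)$ as a quadratic extension of $\FF(\tau,w)$ via $(T-u)(T+v)=T^2-\tau T-w^7$, whereas you write down the degree-$14$ equation $T^{14}-\tau T^7+w^7$ for $x$ directly; these amount to the same computation.
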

\begin{proof} Since we have seen that   $\FF(u, v, w) \subset K^{C_7}$,
and $\FF(\tau, w) \subset K^{D_7}$.  From the equation $f(x, y) =
0$, we see that both $x$ and $y$ satisfy an equation of degree 7
over $\FF(u, v, w)$, and since  $xy = w$, we see that $\FF(x, y) =
\FF(u, v, w)$, so that $K/ \FF(u, v, w)$ has degree at most 7. On
the other hand $K/K^{C_7}$ has exact degree 7 so $\FF(u, v, w) =
K^{C_7}$.

The equation \[h(T) =  (T-u)(T+v) = T^2 - \tau T -w^7 = 0\]
shows that $\FF(u, v, w)$  is a quadratic extension of $\FF(\tau,
w)$. Therefore $\FF(x, y)/\FF(\tau, w)$ has degree at most 14 and
since $K/K^{D_7}$ has exact degree 14, we have $\FF(\tau, w) =
K^{D_7}$.
\end{proof}

Since $\FF(\tau, w) = K^{D_7} = \text{function field of } X/D_7$,
to require that $X/D_7$ has genus $0 (= \PP^1)$ we get

\begin{proposition} \label{P:g0}   $X/D_7$ has genus $0$ if and only if
\[          \psi(w)^2  - 4 ( \phi( w) +2 w^7) =   L(w) C(w)^2,
\quad \text{ where } \quad \deg L(w)\leq 1,  \, \,  \deg C(w)\le
3. \]
\end{proposition}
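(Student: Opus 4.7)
The plan is to use Proposition \ref{P:4}, which identifies the function field of $X/D_7$ with $\FF(\tau, w)$. Because $\tau$ satisfies the quadratic polynomial $g(\tau, w) = \tau^2 + \psi(w)\tau + (\phi(w) + 2w^7) = 0$ over $\FF(w)$, the field $\FF(\tau, w)$ is a degree $2$ extension of the rational function field $\FF(w)$. The first step is to complete the square by setting $\tau' = 2\tau + \psi(w)$, which transforms the defining equation into $\tau'^2 = D(w)$, where
\[
D(w) := \psi(w)^2 - 4\bigl(\phi(w) + 2w^7\bigr).
\]
Thus $K^{D_7} = \FF(w)\bigl(\sqrt{D(w)}\bigr)$, so the curve $X/D_7$ is birational to the hyperelliptic-type double cover $\tau'^2 = D(w)$ of $\PP^1$.

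Next I would apply the standard genus formula for such a double cover. Write the square-free factorization $D(w) = D_0(w)\,D_1(w)^2$ with $D_0$ square-free. Then $\FF(w,\sqrt{D}) = \FF(w,\sqrt{D_0})$, and the smooth projective model of this field has geometric genus $\lfloor (\deg D_0 - 1)/2\rfloor$ when $\deg D_0$ is odd, and $(\deg D_0 - 2)/2$ when $\deg D_0$ is a positive even integer; the extension is trivial (genus $0$) when $\deg D_0 = 0$. Since Proposition \ref{P:3} stipulates $\deg \phi = 7$ and $\deg \psi = 3$, one has $\deg \psi^2 \le 6$ while $\deg(\phi + 2w^7) = 7$ generically, so $\deg D = 7$ and in particular $\deg D_0$ is odd.

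Combining these observations yields the forward direction: genus $0$ forces $\deg D_0 \le 2$, hence $\deg D_0 = 1$ (by parity), so we can take $L := D_0$ and $C := D_1$; the bound $\deg C \le 3$ then follows automatically from $2\deg C \le \deg D \le 7$. For the converse, if $D(w) = L(w) C(w)^2$ with $\deg L \le 1$, then $\FF(w,\sqrt{D}) = \FF(w, \sqrt{L})$, and when $\deg L = 1$, writing $L = \alpha w + \beta$ shows $w \in \FF(\sqrt{L})$ so the field is purely transcendental in $\sqrt{L}$; when $\deg L \le 0$ the extension is either trivial or a constant field extension, and in every case the projective model is $\PP^1$.

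The main bookkeeping obstacle is making the parity argument airtight: one must verify that the leading coefficient of $\phi(w) + 2w^7$ is nonzero so that $\deg D = 7$ is forced, which matches the odd values $\deg D_0 \in \{1,3,5,7\}$ to the single genus-$0$ case $\deg L = 1$. Degenerate specializations (where the leading term cancels) fall under the general principle that the condition $D = LC^2$ with $\deg L \le 1$ is equivalent to the square-free part of $D$ having degree at most $1$, which is what $X/D_7$ having genus $0$ demands in our setting.
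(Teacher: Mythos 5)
Your proposal is correct and follows essentially the same route as the paper: both identify $K^{D_7}=\FF(\tau,w)$ as the quadratic extension of $\FF(w)$ cut out by the discriminant $\psi^2-4(\phi+2w^7)$ and then read off the genus from the ramification of that double cover (the paper counts branch points via Riemann--Hurwitz, you equivalently use the degree of the square-free part). Your explicit attention to the parity of $\deg D$ and the leading coefficient of $\phi+2w^7$ is the same generic assumption the paper makes implicitly with its remark that the right-hand side has degree at most $7$.
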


\begin{proof}  $\FF(\tau, w)$ is a quadratic extension of $\FF(w)$
with equation $g(\tau, w) = 0$.  $\FF(w)$ has genus 0.  Then
$\FF(\tau, w)$ will have genus 0 if and only if the extension $\FF(\tau,
w)/\FF(w)$ has at most 2 ramification points since by the
genus formula.  But this extension is $\FF(w, d)$ where
$d^2 =\text{discriminant of }g$. The expression for the
discriminant of $g$ is on the left-hand side above.  It will have
at most 2 ramification points on $\PP^1_w$ if and only if it has the shape
above (note that the right-hand side has degree at most 7).
\end{proof}

The most important case is where $L(w)$ really is a linear
expression,  say $L(w) = w-a$.  We will assume $a \neq 0$.  We let
$m^2 = w-a$. Then under the hypotheses of Proposition \ref{P:g0}
we have $\FF(\tau, w) = \FF(m)$.  Solving $g(\tau, w) = 0$ by the
quadratic equation we get
\[        \tau (m)  = \dfrac{ -   \psi(  m^2+a  ) \pm m C(m^2+a) }{2}, \quad \text{and define} \quad
 \tau (m) : = \dfrac{ -   \psi(  m^2+a  ) + m C(m^2+a) }{2}.  \]

Note  that, when $C(w)$ and $\psi$ are arbitrary cubic polynomials in
$w$, the expression for $\tau(m)$ represents the general
polynomial of degree 7. Given any degree 7 polynomial $\tau(m)$,
if $a$ and $C(w)$ are given, we can solve the above expression for
$\psi$ and then solve for $\phi$ from the expression in
Proposition \ref{P:g0}; this gives the equation for our curve
$f(x, y)$.

Next we need to see when  the curve $X/C_7$ has genus 2. The
extension of function fields of $X/C_7$ over the function field of
$X/D_7$, which is  $\FF(m)$ by our assumption, is a quadratic
extension, with equation $h(T)$ as in the proof of Proposition
\ref{P:4}. By the well-known genus formula, this will have genus
2 if and only if there are exactly 6 ramification points in the covering
$X/C_7 \to X/D_7=\PP^1_m$. The ramification is given by the
discriminant of $h(T)= T^2 - \tau T -w^7 = 0$, which is
$\tau(m)^2+4w^7 = \tau(m)^2+4(m^2+a)^7$. Therefore we obtain:

\begin{proposition}
\label{P:5}  Under the hypotheses of Proposition $\ref{P:g0}$,
$X/C_7$ has genus $2$ if and only if
\[    \tau(m)^2+4(m^2+a)^7 = q(m)^2 s(m), \, \,  \text{
where  } \, \, \deg q(m) =4, \, \, \deg s(m) = 6.\]
\end{proposition}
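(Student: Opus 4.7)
The plan is to reduce the covering $X/C_7 \to X/D_7$ to the standard form of a hyperelliptic double cover of $\PP^1_m$ and then apply Riemann--Hurwitz. By Proposition \ref{P:4}, this extension of function fields is $\FF(u,v,w)/\FF(\tau,w) = \FF(u,v,w)/\FF(m)$, defined (as in the proof of Proposition \ref{P:5}'s preamble) by the quadratic polynomial $h(T) = T^2 - \tau T - w^7$. The point is that the genus of the top curve depends only on the squarefree part of the discriminant of this quadratic.

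First I would complete the square: setting $D = 2T - \tau$ converts $h(T)=0$ to
\[
D^2 \;=\; \tau^2 + 4w^7,
\]
and substituting $w = m^2 + a$ gives the affine model $D^2 = \tau(m)^2 + 4(m^2+a)^7 =: f(m)$. Since $\deg\tau(m) = 7$, the polynomial $f(m)$ has degree at most $14$, with generic degree exactly $14$. Next I would factor out squares and write $f(m) = q(m)^2\, s(m)$ with $s(m)$ squarefree; the quadratic extension $\FF(m, D)/\FF(m)$ is isomorphic to $\FF(m, D')/\FF(m)$ where $D' = D/q(m)$ satisfies $(D')^2 = s(m)$. This is now a standard hyperelliptic presentation: the branch locus of the double cover is the zero set of $s$, together with the point at infinity if and only if $\deg s$ is odd.

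Applying Riemann--Hurwitz to the smooth projective model of this double cover of $\PP^1_m$, the genus equals $\lfloor (\deg s - 1)/2 \rfloor$. For the cover to have genus $2$, we therefore need $\deg s \in \{5, 6\}$. The final step is a parity argument: since $\deg f = 14$ is even and $\deg f = 2\deg q + \deg s$, the degree of $s$ must also be even. This forces $\deg s = 6$ and hence $\deg q = 4$, which is precisely the condition asserted in the proposition. Reversing the argument, the displayed factorization ensures the squarefree part has degree $6$ and thus the cover has genus $2$.

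The argument is essentially routine; the only subtlety is the degree-parity bookkeeping together with keeping track of the generic assumption that the leading coefficient of $\tau(m)^2 + 4(m^2+a)^7$ does not vanish (so that $\deg f$ really equals $14$). Under the generic hypotheses stated before the proposition this is automatic, so no further case analysis is needed.
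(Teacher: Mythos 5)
Your proof is correct and follows essentially the same route as the paper: both reduce $X/C_7 \to X/D_7 = \PP^1_m$ to a double cover branched at the odd-order zeros of the discriminant $\tau(m)^2 + 4(m^2+a)^7$ of $h(T)=T^2-\tau T - w^7$ and apply Riemann--Hurwitz, so that genus $2$ is equivalent to the squarefree part having degree $6$, i.e.\ to the factorization $q(m)^2 s(m)$ with $\deg q = 4$, $\deg s = 6$. Your completion of the square and the explicit parity bookkeeping at infinity merely make explicit what the paper leaves implicit under its standing generic assumptions.
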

Note that this is in the shape of equation \ref{E:basic}, namely after absorbing constants, 
$a(x)^2 - s(x)b(x) = c(x)^7$.

\noindent{\bf The procedure to write down our genus 8 curves: }
\begin{itemize}
\item[1. ] Choose an  $a \neq 0$; and choose polynomials $q(m)$
and $s(m)$ of degree 4 and 6 respectively.

\item[2. ]    Find a solution $\tau(m)$ of degree 7 to the
equation
\[    \tau(m)^2+4(m^2+a)^7 = q(m)^2 s(m), \, \, \text{
where  } \, \, \deg \tau(m) =7, \, \, \deg q(m) =4, \, \, \deg s(m) =
6.\]

\item[3. ]  Choose a polynomial $C(w)$ of degree 3. Solve  for
$\psi$ to  the equation:
\[      \tau (m) : = \dfrac{ -   \psi(  m^2+a  ) + m C(m^2+a) }{2}.
\]

\item[4. ] Let $L(w) = w-a$. Solve  for  $\phi$ in the equation:
\[    \psi(w)^2  - 4 ( \phi( w) +2 w^7) = L(w)
C(w)^2.
\]

\item[5. ]  Obtain genus 8 curve of the following form:
\[  g(\tau,  w) = \tau^2 +\tau \psi(w)  + ( \phi(w) +2 w^7)=0.\]
\end{itemize}

 To construct the genus 3 curves, we take the quotient $X/t$
 of the genus 8 curves $X$  by any involution $t^2=1$.  There are
7 involutions, all conjugate, so all these genus 3 curves will be
isomorphic.

\begin{proposition}
\label{P:6}
Let $t$ be  the involution  $tx = -y, \,
ty=-x$.  Let $r = x-y$, and  $I$  the subgroup of $D_7$ generated
by $t$.   Then $K^I = \FF(r, w)$, where  $K^I$ is the function field
of $X/I$, and $K = \FF(x, y)$.
\end{proposition}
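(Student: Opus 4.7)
My plan is to prove the equality $K^I = \FF(r, w)$ by the standard two-step argument: first exhibit $\FF(r, w)$ as a subfield of $K^I$ via direct verification of $t$-invariance, and then match degrees.

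\textbf{Step 1 (Invariance).} I would check by direct substitution that both generators of $\FF(r, w)$ lie in $K^I$. Using $t(x) = -y$ and $t(y) = -x$, one computes
\[
t(r) = t(x-y) = -y - (-x) = x - y = r, \qquad t(w) = t(-xy) = -(-y)(-x) = -xy = w.
\]
Hence $\FF(r, w) \subseteq K^I$.

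\textbf{Step 2 (Degree bound from a quadratic relation).} From $r = x - y$ I write $y = x - r$, and then the relation $xy = -w$ becomes $x(x-r) = -w$, i.e.\ $x$ is a root of the monic quadratic
\[
T^2 - rT + w \;\in\; \FF(r, w)[T].
\]
Since $y = x - r$, the field $K = \FF(x, y)$ coincides with $\FF(r, w)(x)$, so $[K : \FF(r, w)] \leq 2$.

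\textbf{Step 3 (Matching degrees).} Because $I = \langle t \rangle$ acts faithfully on $K$, Galois theory gives $[K : K^I] = |I| = 2$. The chain $\FF(r, w) \subseteq K^I \subseteq K$ together with $[K : \FF(r, w)] \leq 2 = [K : K^I]$ forces $[K^I : \FF(r, w)] = 1$, so $K^I = \FF(r, w)$, as claimed.

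There is no real obstacle here: the only thing to be careful about is the sign/convention for $w$ (the statement of Proposition~\ref{P:3} uses $w = -xy$, while the later running discussion occasionally writes $w = xy$), but either convention yields the same monic quadratic for $x$ over $\FF(r, w)$ up to the sign of the constant term, and the invariance computation in Step 1 works identically in both cases. Thus the argument reduces to the elementary verification above.
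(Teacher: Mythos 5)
Your proof is correct, and it executes the degree count differently from the paper. The paper works inside the tower $K^{D_7}=\FF(\tau,w)\subseteq \FF(r,w)\subseteq K$ furnished by Proposition~\ref{P:4}: it exhibits the explicit degree-$7$ relation $h(r,w)=r^7+7wr(r^2+w)^2-\tau=0$ satisfied by $r$ over $\FF(\tau,w)$, and combines this with $[K:K^{D_7}]=14$ to conclude that $[K:\FF(r,w)]=2$, hence $\FF(r,w)=K^I$. You instead bound $[K:\FF(r,w)]$ from above by the quadratic $T^2-rT+w$ for $x$ (using $y=x-r$, so $K=\FF(r,w)(x)$) and compare with $[K:K^I]=|I|=2$ from Artin's theorem. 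Your route is more self-contained: it does not invoke Proposition~\ref{P:4} or the septic identity, and it makes explicit the verification $t(r)=r$, $t(w)=w$, which the paper leaves implicit (strictly speaking the paper's degree argument alone only identifies $\FF(r,w)$ as the fixed field of \emph{some} order-$2$ subgroup, so this check is needed to pin down $\langle t\rangle$). What the paper's longer route buys is the byproduct $h(r,w)=0$, which is precisely the plane model of the genus $3$ quotient $X/I$ used in the discussion immediately following the proposition; your argument establishes the field equality without producing that equation. Your remark on the $w=\pm xy$ sign discrepancy is apt — the paper itself switches conventions, and the identity $h(r,w)=0$ as printed is the one for $w=xy$ — and you correctly note that either choice yields the same conclusion.
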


\begin{proof}
By Proposition \ref{P:4}, $K^{D_7} = \FF(\tau, w)$, where $\tau
= x^7-y^7$; and also $K/K^{D_7}$ is a Galois extension with group
$D_7$, therefore of degree 14.   Clearly  $\FF(\tau, w)$ is a
subfield of $K^I$. On the other hand, $\FF(r, w)$ is an extension
of $\FF(\tau, w)$ of degree 7.  In fact, $r=x-y$ satisfies the
equation
\[       h(r,w)  =      r^7 + 7 w r (r^2 + w)^2 - \tau = 0 .\]
This shows that $\FF(x, y)$ is a degree 2 extension of $\FF(r, w)$
and therefore $\FF(r, w) = K^I$.
\end{proof}

The equation $h(r, w) = 0$ is an equation for the curve of
$X/I$.  We want this curve to have genus 3.  Recall in Proposition
\ref{P:g0}, in order to have $K^{D_7}$ to be of genus 0, the
conditions are imposed on $\tau$ and $w$ that is to express
$\tau = \tau(m)$ with $m^2 = w-a$.   Since $X \to  X/{D_7}=
{\PP^1}$ is branched above 6 points which these are the fixed
points of all the involutions in $D_7$,  we must have $X/I \to
X/{D_7}= {\PP^1}$  will also be branched above 6 points, but this
is a non-Galois extension.  The branching type will be $2,2,2,1$
over each of the 6 points.  Such a curve will have genus 3.  In
order to have this branching behavior, the curve $h(r, w) =
0$, must intersect the discriminant of the extension $\QQ(r,
w)/\QQ(\tau, w)$ transversally in 6 points.

Consider $r, w$  and $\tau, w$ are independent variables so that
the extension of fields $\QQ(r, w)/\QQ(\tau, w)$ represents a
covering of degree 7 of planes, either affine planes  $\A^2 \to
\A^2$, or projective planes $\PP^2 \to \PP^2$.  The discriminant
of the polynomial $h(r, w)$ with respect to $r$ is
\[\Delta_{h} =  -7^7 (\tau^2+4 w^7)^3.\]
Notice that $\tau^2+4 w^7$ was exactly the discriminant we
calculated before.  The condition we need is that, as a function
of $m$, the polynomial $\tau^2 +\tau \psi(w)  + ( \phi(w) +2
w^7)$, which is the equation of the quotient curve $X/D_7$,
intersects the discriminant locus in 6 transversal points
(tangential intersections do not necessarily give ramification).

%\subsection{Solving the equation $q(x)^2s(x)=h(x)^2+g(x)^7$} \label{SS:constr6}
In summary: to construct our curves, we must find polynomial solutions to the equation
\[ a(x)^2-s(x)b(x)^2=c(x)^7, \quad \text{where } \deg a=7, \,
\deg s= 6, \, \deg b =4, \, \deg c=2.\] 
There is a 
total of $ 8+(7+5-1)+3 =22$ variables which are coefficients of the
polynomials of $a, b, c, s$, and total of 15 polynomial equations in those unknown coefficients which
result by comparing like terms in the above equation. Thus, the solution set should be
$22-15=7$ dimensional. Here is a procedure to solve the above Diophantine equation.
Without loss of generality, we homogenize the above equation with coordinates $X, Z$, and assuming $c(x)$ has two 
distinct roots, we can make a linear change in $X, Z$ so that 
$c(x) = XZ$ and rewrite the above
equation as
\[ S_7^2-(XZ)^7= F_6  \prod_{i=1}^4(X-u_i^2Z)^2,    \quad \text{where } \deg S_7=7, \,
%\deg s= 6, \,
 \deg F_6 =6, s(X) = F_6(X, 1), \, u_i^2 \text{ are the distinct
roots}.\] Set $Z=1$, and let $s_7(X)=S_7(X, 1)=\sum_{i=0}^7a_i
X_i$, we obtain
\[  s_7(X)^2- X^7= F_6  \prod_{i=1}^4(X-u_i^2 ) ^2.\]
Hence, we have the following 8 equations gotten by putting in $X = u_i ^2$ into the above and its derivative:
\begin{eqnarray*}
s_7(u_i^2)^2 = u_i^{14},   & \text{ and } & 
2s_7(u_i^2) s_7'(u_i^2) - 7 u_i^{12} =0, \quad i =1, 2, 3, 4.
\end{eqnarray*} 
We can factor the first as $s_7(u_i^2) = \pm u_i^{7}$. Choosing the plus sign and putting into the second 
equation yields
\begin{eqnarray*}
s_7(u_i^2) = u_i^{7},   & \text{ and } & 
2 s_7'(u_i^2) - 7 u_i^{5} =0, \quad i =1, 2, 3, 4.
\end{eqnarray*} 
Now this is a system of 8 {\it linear} equations in the 8 unknown coefficients of $s_7$. Using Cramer's 
rule, we find that the coefficients are explicit rational functions of the variables $u_i$, $i = 1, 2, 3, 4$.
Once we obtain $s_7(X)$,
we divide $s_7(X)^2-X^7$ with $ \prod_{i=1}^4(X-u_i^2 ) ^2$ to determine $s(X)=F_6(X,1)$,
and then we  let $y^2=F_6(X,1)$,which is a genus 2 curve. Further analysis of this equation can be found in section \ref{S:polys}.

\begin{theorem}
\label{T:1}
\begin{itemize}
\item[1.] There is a 4-parameter family of solutions to the equation 
\[
a_u(x)^2 -s_u(x)b_u(x)^2 = x^7
\]
for polynomials $a_u(x), b_u(x), s_u(x)$ of respective degrees 7, 4, 6. The coefficients of these polynomials are in the 
field $\QQ (u_1, u_2, u_3, u_4)$, for variables $u_1, u_2, u_3, u_4$. 

\item[2.] For $u = (u_1, u_2, u_3, u_4)$ in a Zariski-dense open subset of $\mathbb{A}^4$, 
$y^2 = s_u(x)$ defines a genus 2 curve $C_u$ over the field $\QQ (u)$. Letting 
$\varphi = a_u(x) + b_u(x)y$, $\QQ (u, x, y, \sqrt[7]{\varphi})$ is the function field of a genus 8 
curve $X_u$ defined over $\QQ (u)$.  This curve has an action of the group $D_7$, defined over $\QQ (\zeta _7, u)$.
Dividing this genus 8 curve by the action of any involution in $D_7$ we obtain a genus 3 curve 
$Y_u$, defined over $\QQ (u)$, such that $\mr{End}(\mr{Jac}(Y_u))$  contains $\ZZ[\zeta _7 ^+]$. The endomorphisms are
defined over $\QQ (\zeta _7, u)$. 

\end{itemize}

\end{theorem}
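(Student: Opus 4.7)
For part 1, the plan is to unpack the procedure sketched immediately before the theorem. Set $b_u(x) := \prod_{i=1}^4(x - u_i^2)$; then the requirement that $a_u(x)^2 - x^7$ be divisible by $b_u(x)^2$ is equivalent (after choosing the plus sign on each square root) to the eight conditions $a_u(u_i^2) = u_i^7$ and $2 a_u'(u_i^2) = 7 u_i^5$ for $i = 1, \dots, 4$. Treating the eight coefficients of the degree-$7$ polynomial $a_u$ as unknowns, this is an $8 \times 8$ linear system whose matrix is a confluent (Hermite) Vandermonde in $u_1^2, \dots, u_4^2$. Its determinant is a rational scalar multiple of $\prod_{i<j}(u_i^2 - u_j^2)^4$ and is therefore nonzero over $\QQ(u_1,\dots,u_4)$, so Cramer's rule produces the coefficients of $a_u$ as explicit rational functions of $u_1,\dots,u_4$. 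The polynomial $s_u(x) := (a_u(x)^2 - x^7)/b_u(x)^2$ then has degree $14 - 8 = 6$, yielding the claimed four-parameter family of solutions.

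For part 2, I would verify that the hypotheses of Proposition \ref{P:2} apply for $u$ in a Zariski-dense open subset of $\A^4$. Three open conditions must be checked: (i) $\disc(s_u) \neq 0$, so that $C_u : y^2 = s_u(x)$ is a smooth genus $2$ curve; (ii) $\varphi = a_u(x) + b_u(x)y$ is not a $7$-th power in $\QQ(u)(x,y)$; and (iii) $\ddiv(\varphi)$ and $\ddiv(\varphi')$ are coprime on $C_u$, where $\varphi' = a_u(x) - b_u(x)y$. Each is the nonvanishing of a polynomial in $u$, and nonemptiness follows from a single explicit specialization (for instance those produced in Section~8). Condition (iii) is morally the assertion that $\varphi$ and $\varphi'$ share no common zero other than at the hyperelliptic branch locus, which is forced generically by the identity $\ddiv(\varphi) + \ddiv(\varphi') = 7\,\pi^*\ddiv_{\PP^1}(x)$ together with the hyperelliptic involution interchanging the two divisors. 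Granted these, Proposition \ref{P:2} produces a smooth projective curve $X_u$ of genus $8$ with a $D_7$-action. Because $c(x) = x$ here, we can take $\lambda = \mu = 1$, so the involution $t : w \mapsto x/w$ is already $\QQ(u)$-rational, and hence the quotient $Y_u = X_u/\langle t \rangle$ descends to $\QQ(u)$; it has genus $3$ by the data of the diagram in Section \ref{S:constr}.

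The main obstacle is the endomorphism claim $\ZZ[\zeta_7^+] \subseteq \mr{End}(\mr{Jac}(Y_u))$, which is exactly what is postponed from Proposition \ref{P:2}. My plan is to realize the required endomorphism as the correspondence on $Y_u$ cut out by the cover $h : X_u \to Y_u$, namely $h_* \circ s^* \circ h^*$ acting on $\mr{Jac}(Y_u)$. The decisive computation, which I would carry out in tandem with the Hodge-theoretic analysis of Sections~4 and~6, is the decomposition of $H^0(X_u, \Omega^1)$ as a $\QQ[D_7]$-module into the trivial, sign, and faithful $6$-dimensional isotypic components; under $\langle s \rangle$ the faithful part splits as a pair of Galois-conjugate $\QQ(\zeta_7)$-lines, and restriction to $\langle t \rangle$-invariants produces a natural $\QQ(\zeta_7^+)$-structure on $H^0(Y_u, \Omega^1)$ whose induced action on $\mr{Jac}(Y_u)$ has minimal polynomial $x^3 + x^2 - 2x - 1 = 0$. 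Rationality of the endomorphism ring over $\QQ(\zeta_7, u)$ is then automatic, since $s$ acts by $w \mapsto \zeta_7 w$ and $t$ is already $\QQ(u)$-rational.
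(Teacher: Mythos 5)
Your proposal follows the paper's own route essentially verbatim: part 1 is obtained, exactly as in the text preceding the theorem (and in Section \ref{S:polys}), by reducing divisibility of $a_u(x)^2-x^7$ by $\prod_i(x-u_i^2)^2$ to the eight Hermite conditions $a_u(u_i^2)=u_i^7$, $2a_u'(u_i^2)=7u_i^5$ and solving the resulting nonsingular confluent-Vandermonde system by Cramer's rule, while part 2 is deduced from Proposition \ref{P:2} by verifying the same generic open conditions, with the endomorphism realized via the correspondence $h_*\circ s^*\circ h^*$ and the $D_7$-decomposition $2\cdot\mathrm{alt}+\alpha$ of $H^0(\Omega_X)$ just as in Sections \ref{S:reps} and \ref{S:pol}. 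The argument is correct and introduces no substantive deviation from the paper.
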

Explicit equations for the curves $Y_u$ can be found in the appendix, section \ref{S:explicit}. In fact, the equations are symmetric 
functions of $u = (u_1, u_2, u_3, u_4)$, and so can be expressed in terms of elementary symmetric functions.

\section{Representations of $D_7$.}
\label{S:reps}
Here we explain the where Proposition \ref{P:3} comes from, by computing the relevant actions of $D_7$.
If a finite group $G$ acts on a curve $X$ we get an action of $G$ on the cohomology of the curve.
The action of $G$ on $H^0(X)$, $H^2(X)$ will be trivial. Also, by the Hodge decomposition:
\[H^1(X, \mathbb{C}) = H^0(X, \Omega_X) \oplus \overline{H^0(X, \Omega_X)} \quad \text{with}
\quad  \overline{H^0(X, \Omega_X)} = H^1(X,\mathcal{O}_X) ,\]
the representation of $G$ on $H^1 (X, \mathbb{C})$
decomposes as $r+\bar{r}$ , where $r$ is the representation on the
$g$-dimensional space of holomorphic differential 1-forms.
Recall the Lefschetz fixed point formula:
\[       (h^0-h^1+h^2) (u)  = \text{fix}(u),  \quad \forall  \, u \in G,\]
where  $h^i$  is the character of the $G$-module  $H^i$, and $\text{fix}(u)$ is the number of fixed
points of $u$ on the curve $X$,  counted with multiplicity. These characters depend only on the conjugacy class of $u$ in $G$.

  In our case $G=D_7$, $g = 8$. Also we know the fixed points because we know the ramification data in the various coverings
  $X \to X/H$  for subgroups $H$.  We know
\begin{itemize}
  \item[1.]   $X \to X/t$  has 6 branch points because the quotient has genus 3.
(Recall the genus formula: If  $X \to Y$  is a covering of curves of
degree $n$, then  $n(2-2g_Y) -e  = 2-2g_X$ where $e$ is the total
ramification (branching) order of the covering.  Since $g_X=8$ and
$g_{X/t}=3$, the genus formula $2(2-2(3))-e=2-2(8)$ shows that
$e=6$.)

 \item[2.]  $X \to X/s^\alpha$  has 0 branch points because the quotient has genus 2,
 $\alpha=1, \ldots, 6$.  (Since $g_X=8$ and $g_{X/s^\alpha}=2$, the genus formula  $7(2-2(2))-e=2-2(8)$ shows that
 $e=0$.)
\end{itemize}
So $\text{fix}(t) = 6$ and $\text{fix}(s^\alpha) = 0$. There are 5
conjugacy classes in $D_7$ and therefore 5 isomorphism classes of
irreducible representations:
\begin{itemize}
\item[1.] The simple one dimensional representations are given by
$\sigma=1$ and $\tau =\pm 1$.  Let us denote them by $1$ and
$\alt$.  \item[2.] The simple two dimensional representations are
given by $\sigma = \left(
\begin{matrix} \zeta^k & 0 \\ 0 & \zeta^{-k} \end{matrix} \right)$, and $\tau =
\left( \begin{matrix}  0  & 1 \\ 1 & 0 \end{matrix}  \right)$
where $\zeta = e^{2 \pi i/7}$,  $1 \leq  k \leq 3$.  Let denote
these by $\chi_1, \chi_2, \chi_3$.
\end{itemize}

The conjugacy classes are represented by $1,   s, s^2, s^3, t$
(that is the classes of $1, \{s, s^6\}, \{s^2, s^5\}, \{s^3,
s^4\}, t$), and the irreducible representations are denoted by $1,
\alt, \chi_a$ for $a=1,2,3$ respectively. The character table with
columns indexed by the conjugacy classes, the rows indexed by the
irreducible representations, and the entries are the values of the
characters:
\begin{center}
\begin{tabular}{|l|l|l|l|}  \hline
& 1 &  $t$   &  $s^b$ (\text{where } $b=1, 2, 3$) \\
\hline
1 & 1 & 1 & 1  \\
\hline
$\alt$   \,  & 1 & -1 & 1 \\
\hline  $\chi_a$  (\text{where } $a=1, 2, 3$) & 2 & 0 &  $\zeta^{ab}+\zeta^{-ab}$  \, (\text{where } $\zeta=e^{2 \pi i/7}$)   \\
\hline
\end{tabular}
\end{center}
 Let $\alpha$ be the character $\chi_1+\chi_2 +\chi_3$, which is defined over $\QQ$.
 In fact, $\alpha(1)=2+2+2=6$, $\alpha(t)=0+0+0=0$,
 $\alpha(s^b)= -1$ via the identity $\sum_{i=1}^6 \zeta^i =-1$.

\begin{proposition}
\label{P:7}
The character $r$ of  $D_7$  acting on the $8$ dimensional vector
space $H^0 (\Omega_X )$ is  $2 \cdot \alt +  \alpha$.
\end{proposition}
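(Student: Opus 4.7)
The plan is to compute the character $r$ of $H^0(\Omega_X)$ by going through $H^1(X,\C)$ via the Lefschetz fixed-point formula combined with the Hodge decomposition. A useful preliminary observation is that every irreducible character of $D_7$ is real-valued: one has $\chi_a(s^b) = 2\cos(2\pi ab/7)\in \RR$, and the one-dimensional characters are obviously real. Consequently $\bar r = r$ as a character, and the Hodge decomposition $H^1(X,\C) = H^0(\Omega_X) \oplus \overline{H^0(\Omega_X)}$ gives $h^1 = r + \bar r = 2r$. It therefore suffices to compute $h^1$ on the five conjugacy classes $\{1, t, s, s^2, s^3\}$ and divide by two.

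At the identity I use $h^1(1) = 2 g_X = 16$, so $r(1) = 8$, matching $\dim H^0(\Omega_X)$. For a non-trivial $u\in D_7$ the Lefschetz fixed-point formula yields
\[
h^0(u) - h^1(u) + h^2(u) = \mathrm{fix}(u),
\]
and since $D_7$ acts trivially on the one-dimensional spaces $H^0(X,\C)$ and $H^2(X,\C)$ this reduces to $r(u) = (2 - \mathrm{fix}(u))/2$. Any non-trivial holomorphic automorphism of a compact Riemann surface has only isolated fixed points, each contributing $+1$ to the topological Lefschetz number because the derivative at such a point is multiplication by a root of unity $\zeta\ne 1$ with $|1-\zeta|^2>0$. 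So $\mathrm{fix}(u)$ is the geometric count already recorded: $\mathrm{fix}(t) = 6$ and $\mathrm{fix}(s^b) = 0$ for $b = 1,2,3$, giving $r(t) = -2$ and $r(s^b) = 1$.

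It remains to compare with the character $2\,\alt + \alpha$. Evaluating on the five conjugacy classes produces $2(1) + 6 = 8$, $2(-1) + 0 = -2$, and $2(1) + (-1) = 1$, in agreement with $r$ on every class. Since a complex character determines its representation up to isomorphism, this forces $r = 2\,\alt + \alpha$. The argument is essentially bookkeeping; the only mildly delicate point is the identification of the topological Lefschetz number with the cardinality of the fixed-point set, which is the standard fact just recalled for holomorphic automorphisms of a compact Riemann surface.
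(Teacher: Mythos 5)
Your proof is correct and follows essentially the same route as the paper: apply the Lefschetz fixed point formula with $\mathrm{fix}(t)=6$, $\mathrm{fix}(s^b)=0$ to get $h^1$ on each conjugacy class, identify $h^1=4\cdot\alt+2\cdot\alpha$ from the character table, and halve using $h^1=r+\bar r$. Your explicit remarks that all irreducible characters of $D_7$ are real (so $\bar r=r$) and that each isolated holomorphic fixed point has local index $+1$ are points the paper leaves implicit, but they do not change the argument.
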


\begin{proof}
From  Lefschetz fixed point formula:
\[       (h^0-h^1+h^2) (u)  =(2-h^1)(u)= \text{fix}(u),  \quad \forall  \, u \in G, \text{ where $\text{fix}(u)$ is the number of fixed point}.\]
Since $\text{fix}(t) = 6$ and $\text{fix}(s^\alpha) = 0$, we have
$h^1(t) = -4$,  $h^1(s^\alpha)=2$, and $h^1 (1)= 16$. Comparing
with the  above characteristic table  $h^1=4\cdot \alt+2 \cdot
\alpha$, and hence $h^1= r+\bar{r}$, and the claim follows
directly.
\end{proof}

A model of the representation $\alpha$ is the following:
We let $V$ be the 6-dimensional $\QQ$-vector space which is the subspace of the cycltomic field $\QQ(\zeta)$ consisting of
         \[      a_0+a_1  \zeta +a_2 \zeta^2+a_3 \zeta^3+a_4 \zeta^4+a_5 \zeta^5+a_6 \zeta^6, \quad \text{such that } \, \sum_{i=0}^6 a_i =0. \]
Then $s$  acts on $V$ by multiplication by $\zeta$  and $t$  acts on $V$ by complex conjugation.

Recall that every smooth projective non-hyperelliptic curve admits a canonical embedding
                        \[ X  \to \PP (H^0 (\Omega_X )) = \PP^{g-1}.\]
This is $G$-equivariant if a group acts. In our case $g = 8$, and the action of $D_7$ is via the linear representation  $r$  above. One strategy to write down explicit curves of genus 8 with a  $D_7$-action is to write down the ideal $I$ of the canonical embedding.   In our case, it turns out that $I$ is generated by 15 quadrics with 35 syzygies. In fact, the entire free graded resolution was worked out by F. O. Schreyer (\cite{CEFS}).

We can construct an equivariant projection of the curve $X$ via
\[       \PP (2 \cdot \alt \oplus \alpha) = \PP^7  \to \PP (\alt \oplus \chi_1) = \PP^2.\]
This means that we represent $X$ by one equation which is $D_7$-invariant.  It has degree $14=2g-2$.
 The group $D_7$ acts on coordinates $(z, x, y)$  by  $s(z) = z$, $s(x) =\zeta x$, $s(y) =\zeta^{-1} y$, $t(z) = -z$, $t(x) = y$, $t(y) = x$,  $\zeta= e^{2 \pi  i/7}$.

We are interested in two things

\begin{itemize}
\item[1.] The equation for the projection of $X$. This will be a
polynomial $f(x, y, z) = 0$ of degree 14 and $D_7$-invariant. This
curve will necessarily have 70 singularities, because $X$ has
genus 8 (for the generic case, there will have 70 ordinary double
points by the genus and degree formula $g=(d-1)(d-1)/2 -
\sum_{i=1}^k r(r-1)/2$ where $d$ is the degree of the curve, $r$
is the order of the ordinary singularity, and $k$ is the number of
singular points). The location of these double points is important
and not arbitrary. In fact, the singularities will have the
following structure, in the ``generic case'':
   \begin{itemize}
   \item[a.]  Three $D_7$-orbits size 14,  belonging to non-fixed points of   $D_7$.
  \item[b.] Four orbits of  size 7 belonging to $t$-fixed points.
\end{itemize}
\item[2.] The adjoint curves of degree $d-3 = 11$, passing through all the double points of $X$. These form an 8-dimensional vector space isomorphic to the module of differentials $H^0 (\Omega_X)$.
     \end{itemize}

   These adjoints must form a representation of $D_7$   isomorphic to $2 \cdot \alt + \alpha$, by
   Proposition \ref{P:7}.  This condition limits the possibilities for the singularities of $X$.

\begin{lemma}
\label{L:2}
The fixed points of $D_7$ in $\PP(\alt \oplus \chi_1)$ are:
\begin{center}
\begin{tabular}{|l|l|l|}  \hline
\rm{Fixed point} & \rm{Stabilizer subgroup} & 
\rm{Size of orbit}  \\
\hline
$(1,0,0)$ & $D_7$  & $1$   \\
\hline
$(0,1,0)$ & $\langle s \rangle$  & $2$   \\
\hline
$(0,0,1)$ & $\langle s \rangle$  & $2$   \\
\hline
$(1,x,-x)$ & $\langle t \rangle $  & $7$   \\
\hline
$(0, 1 \pm 1)$ & $\langle t \rangle $  & $7$   \\
\hline
\end{tabular}
\end{center}
The fixed points $(1,\zeta^a x, \zeta^{-a} x)$ of  $s^a  t s^{-a}$  are in the orbit of $(1, x, -x)$.
The fixed points $(0, \zeta^a,  \pm \zeta^{-a})$ of  $s^a  t s^{-a}$  are in the orbit of $(0, 1, \pm 1)$.  We have a line of $t$-fixed points $x+y = 0$.
\end{lemma}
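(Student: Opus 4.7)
The plan is to enumerate the nontrivial subgroups of $D_7$ and compute their fixed loci in $\PP^2$ directly, then assemble the stabilizer/orbit data using orbit–stabilizer. The proper nontrivial subgroups are $\langle s\rangle$ of order $7$ and the seven conjugate subgroups $\langle s^i t s^{-i}\rangle = \langle s^i t\rangle$ of order $2$, plus $D_7$ itself. Since $s$ acts diagonally on $(z,x,y)$ with three distinct eigenvalues $1,\zeta,\zeta^{-1}$, the fixed locus of $\langle s\rangle$ in $\PP^2$ is just the three coordinate points $(1,0,0)$, $(0,1,0)$, $(0,0,1)$; this handles the first three rows of the table.

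Next I would compute the $t$-fixed locus by solving $t(z,x,y) = \lambda(z,x,y)$, i.e.\ $(-z,y,x)=\lambda(z,x,y)$. The last two equations force $\lambda^2=1$. The case $\lambda=-1$ gives $y=-x$ with $z$ arbitrary, i.e.\ the projective line $\{x+y=0\}$ fixed pointwise by $t$; a generic point on this line can be written $(1,x,-x)$, and the exceptional point $(0,1,-1)$ on this line with $z=0$. The case $\lambda=+1$ gives $z=0$ and $y=x$, yielding the isolated fixed point $(0,1,1)$. This accounts for all $t$-fixed points in $\PP^2$.

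Now I would determine stabilizers. The coordinate point $(1,0,0)$ lies on the line $\{x+y=0\}$ and is also fixed by $s$, so its stabilizer is all of $D_7$ and its orbit has size $1$. The points $(0,1,0)$ and $(0,0,1)$ are $s$-fixed and swapped by $t$, so each has stabilizer $\langle s\rangle$ and they form a single orbit of size $2$. For a generic point $(1,x,-x)$ on the $t$-fixed line (with $x\neq 0$ and $z\neq 0$), the stabilizer contains $\langle t\rangle$; it cannot be larger because any element of $\langle s\rangle\setminus\{e\}$ moves it off the line (explicitly, $s^a(1,x,-x) = (1,\zeta^a x, -\zeta^{-a}x)$, which lies on $\{x+y=0\}$ only if $\zeta^a x = \zeta^{-a}x$, forcing $a=0$). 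Hence its stabilizer is exactly $\langle t\rangle$ and by orbit–stabilizer the orbit has size $14/2 = 7$; the same argument applies to $(0,1,\pm 1)$. Finally I would remark that $s^a$ conjugates $t$ to $s^a t s^{-a}$ and sends the $t$-fixed line to the $s^a t s^{-a}$-fixed line, which immediately gives the parenthetical claims: the $s^a t s^{-a}$-fixed points $(1,\zeta^a x,\zeta^{-a}x)$ and $(0,\zeta^a,\pm\zeta^{-a})$ lie in the orbits of $(1,x,-x)$ and $(0,1,\pm 1)$ respectively.

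There is really no substantive obstacle; the only place to be careful is tracking projective equivalence (which forces the eigenvalue calculation $\lambda^2=1$ for involutions) and verifying that the stabilizers of the generic points on the $t$-fixed line are not accidentally larger than $\langle t\rangle$. The rest is case analysis encoded in the orbit–stabilizer theorem applied to a group of order $14$.
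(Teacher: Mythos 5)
Your proposal is correct and is just the fully written-out version of what the paper does, since the paper's proof is literally ``this can be checked case by case'': you enumerate the subgroups of $D_7$, solve the projective eigenvalue conditions for $s$ and $t$, and apply orbit--stabilizer. The only point worth flagging is notational: your own computation correctly gives $s^a(1,x,-x)=(1,\zeta^a x,-\zeta^{-a}x)$, so the fixed locus of $s^ats^{-a}$ is the line $\zeta^{-a}X+\zeta^{a}Y=0$, and the sign discrepancy with the form $(1,\zeta^a x,\zeta^{-a}x)$ printed in the lemma lies in the lemma's statement, not in your argument.
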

\begin{proof}
This can be checked case by case.
\end{proof}

Let $V$ be the representation $\alt \oplus \chi_1$ of $D_7$. We
want to study the action of $D_7$ on polynomials in $V$, in other
words the symmetric powers $\Sym^n (V)$. We need to know the
multiplication (i.e., the tensor products) of the basic
representations.

\begin{lemma}
\label{L:3}
We have
\[ \Sym^n  (\alt+\chi_1 )= \sum_{i=0}^n \alt^i  \otimes \Sym^{n-i} (\chi_1).\]
Also  $\alt^i=1$, and $\alt$ depending on whether $i$  is even or odd, and $\alt \otimes \chi_a = \chi_a$.
\end{lemma}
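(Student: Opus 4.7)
The plan is to verify the three assertions in the stated order, all by standard character theory and the elementary algebra of symmetric powers.

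For the first identity, I would invoke the standard isomorphism
\[ \Sym^n(U \oplus W) \cong \bigoplus_{i=0}^n \Sym^i(U) \otimes \Sym^{n-i}(W) \]
valid for any two representations over a field of characteristic zero (this is the representation-theoretic shadow of the binomial expansion for symmetric algebras, $\Sym(U\oplus W) = \Sym(U)\otimes \Sym(W)$, and is $G$-equivariant when $U, W$ are $G$-modules). Specialising $U = \alt$, $W = \chi_1$ and using that for a one-dimensional representation $L$ one has $\Sym^i(L) = L^{\otimes i}$, the decomposition of the lemma follows immediately.

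For the second claim, note that $\alt$ is the one-dimensional character with $\alt(1)=1$, $\alt(t) = -1$, $\alt(s^b)=1$. Squaring these values gives the trivial character, so $\alt \otimes \alt = 1$; by induction $\alt^{\otimes i}$ equals $1$ when $i$ is even and $\alt$ when $i$ is odd.

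For the third claim, I would read off the characters from the table already given in the text: $\chi_a(1) = 2,\ \chi_a(t)=0,\ \chi_a(s^b) = \zeta^{ab}+\zeta^{-ab}$ and $\alt(1)=1,\ \alt(t)=-1,\ \alt(s^b) = 1$. The character of $\alt\otimes\chi_a$ is the pointwise product, giving $2, 0, \zeta^{ab}+\zeta^{-ab}$ on the three conjugacy classes $1,\,t,\,s^b$ respectively — exactly the character of $\chi_a$. Since irreducible representations of a finite group over $\mathbb{C}$ are determined by their characters, $\alt \otimes \chi_a \cong \chi_a$.

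There is no real obstacle here; the only thing worth being careful about is that the symmetric power identity is applied $G$-equivariantly, which is why it feeds correctly into the character formalism used for the remaining two verifications.
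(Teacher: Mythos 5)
Your proof is correct and follows essentially the same route as the paper: both rest on the standard decomposition $\Sym^n(U\oplus W)\cong\bigoplus_i \Sym^i(U)\otimes\Sym^{n-i}(W)$ (the paper phrases this via the symmetrized-monomial basis of $\Sym^n(V)$), combined with $\Sym^i(L)=L^{\otimes i}$ for one-dimensional $L$. You are slightly more complete in that you also spell out the character verifications for $\alt^{\otimes i}$ and $\alt\otimes\chi_a$, which the paper leaves implicit.
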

\begin{proof}  The claim is true since
it is know that if $\{\bv_1, \ldots, \bv_m\}$ is a basis for $V$,
then a basis for $\Sym^n(V)$ is $\left\{ \dfrac{1}{n!}
\sum_{\sigma \in S_n} \bv_{k_{\sigma(1)}} \otimes \cdots \otimes
\bv_{k_{\sigma(n)}} \right\}$ as $1 \leq k_1 \leq \cdots \leq k_n
\leq m.$
\end{proof}

\begin{remark}
\label{R:1}
The character of $\Sym^m(V)$ is the linear combination of $\{
\prod_i \chi(g^{r_i})^{e_i} ~|~  \sum_i r_i e_i =m \}$ where $g
\in G$.
\end{remark}

It remains to calculate the $\Sym^i(\chi_1)$. The easiest way to do this is to consider the restriction to the cyclic subgroup
$\langle s \rangle$.  Alternatively, one can use Molien's formula. The result is:

\begin{proposition}
\label{P:8}
We have
\[      \Sym^{11}  (V)= 3 \cdot 1+9 \cdot \alt+11 \cdot \alpha \quad \text{and} \quad
\Sym^{14}  (V)= 13 \cdot 1+5 \cdot \alt+17 \cdot \alpha.\]
\end{proposition}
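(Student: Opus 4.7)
The plan is to invoke Lemma \ref{L:3} to write
\[\Sym^n(V) = \bigoplus_{i=0}^n \alt^i \otimes \Sym^{n-i}(\chi_1)\]
for $n = 11$ and $n = 14$, and to decompose each summand $\Sym^k(\chi_1)$ directly as a $D_7$-module. Choose a weight basis $e_+, e_-$ of $\chi_1$ with $s\cdot e_\pm = \zeta^{\pm 1}e_\pm$ and $t\cdot e_\pm = e_\mp$. The monomials $e_+^i e_-^{k-i}$ ($0 \le i \le k$) form a simultaneous $s$-eigenbasis of $\Sym^k(\chi_1)$, with eigenvalue $\zeta^{2i-k}$. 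Tabulating the multiplicity of each residue class of $2i - k$ modulo $7$ yields the restriction to $\langle s \rangle$.

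For each nonzero class $j \in (\ZZ/7)^*$, the weights $j$ and $-j$ pair under $t$ and produce copies of $\chi_{|j|}$, with multiplicity equal to the common count of $i$'s in each class (one checks $m_j = m_{-j}$ using the involution $i \leftrightarrow k-i$). The zero-weight monomials (those $i$ with $2i \equiv k \pmod 7$) span a $t$-stable subspace, trivial under $\langle s \rangle$, hence a direct sum of copies of $1$ and $\alt$ as a $D_7$-module. To split this piece, I compute the trace of $t$ on $\Sym^k(\chi_1)$: since $t$ sends $e_+^i e_-^{k-i}$ to $e_+^{k-i} e_-^i$, the trace equals the number of fixed monomials, namely $1$ if $k$ is even and $0$ if $k$ is odd. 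Combined with the known dimension of the zero-weight part, this pins down the multiplicities of $1$ and $\alt$, giving $\Sym^k(\chi_1)$ completely for every $k \le 14$ in a short finite tabulation.

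Finally I assemble: for each $i = 0, 1, \ldots, n$, note that $\alt^i \otimes \Sym^{n-i}(\chi_1)$ equals $\Sym^{n-i}(\chi_1)$ when $i$ is even and $\alt \otimes \Sym^{n-i}(\chi_1)$ when $i$ is odd; in the latter case, I rewrite using $\alt \otimes 1 = \alt$, $\alt \otimes \alt = 1$, $\alt \otimes \chi_a = \chi_a$. Summing the isotypic multiplicities across the $n+1$ summands should yield $3 \cdot 1 + 9 \cdot \alt + 11\alpha$ for $n = 11$ and $13 \cdot 1 + 5 \cdot \alt + 17 \alpha$ for $n = 14$, with total dimensions $\binom{13}{2} = 78$ and $\binom{16}{2} = 120$ as a sanity check.

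The main obstacle is not conceptual but organizational: with up to fifteen summands and five irreducible characters to track simultaneously, the tabulation must be done systematically (effectively filling in a $15 \times 5$ table) to avoid arithmetic slips. An alternative, perhaps cleaner, bookkeeping device would be to apply Molien's formula directly to each isotypic component, replacing the combinatorial sum by a generating-function computation in the three variables $(1 - t)^{-1}$, $(1 - \zeta^b t)^{-1}(1 - \zeta^{-b} t)^{-1}$, and $(1 - t)^{-1}(1+t)^{-2}$ arising from the three conjugacy-class types of $D_7$.
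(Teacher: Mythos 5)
Your proposal is correct and follows essentially the same route as the paper, which (very tersely) says to use Lemma \ref{L:3} and compute each $\Sym^k(\chi_1)$ by restricting to $\langle s\rangle$, with Molien's formula noted as an alternative; your additional use of the trace of $t$ to separate the $1$ and $\alt$ constituents of the zero-weight space is exactly the step needed to make that sketch complete, and the resulting tabulation does assemble to $3\cdot 1+9\cdot\alt+11\alpha$ and $13\cdot 1+5\cdot\alt+17\alpha$ with the dimension checks $78$ and $120$.
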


This shows that the space of  $D_7$-invariant polynomials of
degree 14 is 13 dimensional. This is spanned by $z^{2i}
(xy)^{7-i}$ for $i = 0, \ldots, 7$;  $z(x^7-y^7 ) z^{2i}
(xy)^{3-i}$ for  $i=0, \ldots, 3$; and  $x^{14}+y^{14}$.  A linear
combination of these gives the equation $f(x, y, z) = 0$ as
claimed in proposition \ref{P:3}.

We have an equivariant exact sequence  
 \[  \begin{CD}
     0@>>>   H^0 (\Omega_X )  @>\lambda>>  \Sym^{11}  (V)@>\mu>> 
     \displaystyle{\bigoplus _{P\in \rm {Sing}(X)}} \ \C @>>> 0
\end{CD}        
\]
where the map $\mu$ sends each degree 11 homogeneous polynomial to its value at each of the 
$70$ singular points of the curve $X$. We making a general position assumption that $X$ has $70$ 
ordinary double points. An element in $ \Ker(\lambda)$ is an adjoint curve and defines a regular 
differential on $X$ in the usual way. Since we know the character of the $D_7$-action of first two terms above, 
we obtain the character of the action on the right-hand side, namely

\[     \displaystyle{\bigoplus _{P\in \rm {Sing}(X)}} \ \C  = (3 \cdot 1+9 \cdot \alt+11 \cdot \alpha ) -(2 \cdot \alt +\alpha) =  3 \cdot 1 + 7 \cdot \alt + 10 \cdot \alpha. \]
Since
the curve is invariant, the singular set of it will lie in orbits
(of size 1, 2, 7, or 14). The sum must form a
representation isomorphic to  $3 \cdot 1 + 7 \cdot \alt + 10 \cdot
\alpha$. This puts constraints on the location and types of orbits.

Each of these orbits defines an induced representation of the stabilizer of one of its points. We analyze 
these orbits now. If $P$ is a point in the projective plane which is fixed
by a subgroup $H \subseteq  D_7$, the linear form ``evaluation of
$g$ on the orbit defined by $P$''  is the induced representation
\[  \text{Ind}_H^{D_7} (\vartheta), \quad \text{ where $\vartheta$ is the character of  the group $H$, ``evaluation of $g$ at $P$''.}\]

\begin{lemma}
\label{L:4}
$\mr{Ind}_1^{D_7}(1)= 1 + \alt + 2 \alpha$, and $\mr{Ind}_{\langle t \rangle}^{D_7}(\mr{sgn})=\alt + \alpha$, where
$\mr{sgn}$ is the character of the group $\langle t \rangle$, with $\mr{sgn}(t) = -1$.
\end{lemma}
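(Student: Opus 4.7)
My plan is to verify both induced-character decompositions by Frobenius reciprocity, comparing the inner products of each candidate decomposition against the irreducibles listed in the character table.

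For the first identity, I would observe that $\mr{Ind}_1^{D_7}(1)$ is by definition the regular representation of $D_7$. Its standard decomposition expresses it as the sum of each irreducible with multiplicity equal to its dimension: $1 + \alt + 2(\chi_1 + \chi_2 + \chi_3)$. Since $\alpha = \chi_1 + \chi_2 + \chi_3$ by definition, this is exactly $1 + \alt + 2\alpha$. As a sanity check the dimensions agree: $|D_7| = 14 = 1 + 1 + 2 + 2 + 2 + 2 + 2 + 2$.

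For the second identity, I would first note that $\dim \mr{Ind}_{\langle t \rangle}^{D_7}(\mr{sgn}) = [D_7 : \langle t \rangle] = 7 = 1 + 6 = \dim(\alt + \alpha)$, so the dimensions match. Then I would compute, for each irreducible $\psi$ of $D_7$, the inner product $\langle \mr{Ind}(\mr{sgn}), \psi\rangle_{D_7} = \langle \mr{sgn}, \mr{Res}_{\langle t\rangle}\psi\rangle_{\langle t\rangle}$ using Frobenius reciprocity. Reading off values at $1$ and $t$ from the character table: $\mr{Res}(1) = (1,1)$, $\mr{Res}(\alt) = (1,-1) = \mr{sgn}$, $\mr{Res}(\chi_a) = (2,0)$. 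Therefore $\langle \mr{sgn}, \mr{Res}(1)\rangle = \tfrac{1}{2}(1 - 1) = 0$, $\langle \mr{sgn}, \mr{Res}(\alt)\rangle = \tfrac{1}{2}(1 + 1) = 1$, and $\langle \mr{sgn}, \mr{Res}(\chi_a)\rangle = \tfrac{1}{2}(2 + 0) = 1$ for each $a = 1,2,3$. Summing up gives $\mr{Ind}_{\langle t \rangle}^{D_7}(\mr{sgn}) = \alt + \chi_1 + \chi_2 + \chi_3 = \alt + \alpha$.

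There is essentially no obstacle here: both claims reduce to a short application of Frobenius reciprocity plus direct readings from the character table that appears immediately before Proposition \ref{P:7}. If one preferred to avoid Frobenius reciprocity, one could alternatively compute the induced character values at $1$, $t$, and $s^b$ using the standard formula $\mr{Ind}_H^G(\chi)(g) = \frac{1}{|H|}\sum_{x \in G,\ x^{-1}gx \in H}\chi(x^{-1}gx)$; for the first character one gets $(14, 0, 0)$ and for the second $(7, -1, 0)$, and matching these against the table entries yields the same decomposition.
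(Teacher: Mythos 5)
Your proof is correct and follows the same route as the paper: the first identity is the standard decomposition of the regular representation, and the second is Frobenius reciprocity applied to the restrictions read off from the character table. You simply carry out in detail the inner-product computations that the paper leaves to the reader.
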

\begin{proof}
The first is the well-known decomposition of the regular
representation: it is a sum of all the irreducible representations, each appearing with
multiplicity equal to its degree. The second follows easily from Frobenius reciprocity.
\end{proof}

If $P$ is a non-fixed point, we have $H=1$, so the evaluation on the orbit of a non-fixed point gives a contribution $= 1+\alt+2\alpha$  to $\Ker(\lambda)$.  Evaluation on a $t$-fixed point of the type $(1, x, -x)$ gives the character $\text{sgn}(t) = -1$, because
$g(tP) = g(-1, -x, x) = -g(1, x, -x)$, since degree $g = 11$ is odd. Therefore we get a contribution  $\alt+ \alpha$ to $\Ker(\lambda)$. Since
\[    3 (1+\alt+2 \alpha) + 4(\alt+ \alpha) = 3 \cdot 1 + 7 \cdot \alt + 10 \cdot \alpha,\]
this suggests that our curve $X$ should most likely have 70 singularities distributed in three sets of 14 which are non-fixed points ,
and four sets of $t$-fixed points on the line $x+y=0$ (orbits of size 7). If the equation of the curve is $f(x, y) = 0$, the condition that
$f(x) := f(x, -x)$ shall have 4 double roots means that $ f(x)= q(x)^2 s(x)$ where  $\deg q = 4, \, \deg s = 6$. On the other hand,
we must also have $f(x)= h(x)^2+g(x)^7$ by reasoning that proved proposition \ref{P:5}. We are led again to an
equation
\[     q(x)^2 s(x)=h(x)^2+g(x)^7, \quad \text{ where  }\,  \deg q = 4, \, \deg s = 6, \, \deg h = 7, \,  \deg g = 2. \]

%%%%%%%%%%%%%%%%%%%%%%%%%%%%%%%%%%%
%%%%%%%%%%%%%%%%%%%%%%%%%%%%%%%%%%%%

\section{On the moduli of this family}
\label{S:moduli}
%\subsection{\rm{\!\!}}\label{SS:moduli1}

Recall that our genus three curves are constructed as follows: One starts from suitable polynomials
$f(x), p(x), q(x), h(x)$ of respective degrees $6, 7, 4, 2$ such that $p^2-fq^2 = h^7$. Let $C$ be the genus 2 curve
with equation $y^2 = f(x)$, and $X$ the genus 8 curve that is the cyclic unramified covering of $C$ of degree 7 given
by adjoining a 7th root of $p(x) + q(x)y$, which is in the function field of $C$. The function field of $X$ is $k(x, y, z)$,
where $z^7 = p(x) + q(x)y$. Multiplying this by  $z^7 = p(x) - q(x)y$ we get an equation
$$z^{14} -2p z^7 +h^7=0,$$
which defines a Galois $D_7$ covering of $k(x) = k(\Proj^1)$, with generators $\sigma : x\mapsto x, y\mapsto y, z\mapsto \zeta _7 z$, and
$\tau: x\mapsto x, y\mapsto -y, z \mapsto h/z$. The subfield of $k(x, y, z)$ invariant under the involution $\tau$ is
$k(x, y, w)$, where $w = z+ h/z$, and it is the function field of a genus 3 curve $Y$ with an action of $\ZZ[\zeta _7 ^+]$ on its Jacobian. The equation of $w$ is
\[
w^7 -7hw^5+14h^2z^3-7h^3w -2p=0.
\]
The discriminant of this equation  with respect to $w$ is $-2^67^7 (fq^2)^3$, which shows that the ramification of the covering
$k(x, y, w)/k(x)$ is in the set $fq=0$; in fact it is ramified exactly in the 6 points of $f=0$.

%\subsection{\rm{\!\!}}\label{SS:moduli2}
 In this manner, we have a map of moduli spaces $\lambda: \mfr{M}_2 (7) \to \mfr{M}_3$, where $\mfr{M}_2 (7) $
is the moduli space of $(C, G)$ where $C$ is a genus 2 curves and $G \subset \mr{Jac}(C)$ is a subgroup of order 7, and
$\mfr{M}_3$ is the moduli space of genus 3 curves. Since $\dim \mfr{M}_2 (7) =3$ our goal is to prove that
the image of this map is 3-dimensional. We do this by showing that the map on tangent spaces
\[
T_a \mfr{M}_2 (7) \longrightarrow T_{\lambda(a)} \mfr{M}_3
\]
has maximal rank (i.e., 3) in a general point $a$. From the canonical isomorphism
\[
T_{\lambda(a)} \mfr{M}_3  = H^1 (Y, \, T_Y), \quad Y = \lambda (a), \quad T_Y = \text{tangent\ sheaf},
\]
given by the Kodaira-Spencer deformation class, we must calculate these deformation classes in a neighborhood of $a$.
Rather than working with $a$ directly, we work with the parameters $s, t, u, v$ that define our family of curves.
That is, we choose a convenient value of these parameters and consider first-order deformations around this value.

%\subsection{\rm{\!\!}} \label{SS:moduli3}
We calculate the deformation classes Hodge-theoretically. Namely in a family of curves $Y_{\alpha}$, depending on
a parameter $\alpha \in S$, we have a canonical connection
$$\nabla : H^1 _{DR} (Y_{\alpha}) \to  \Omega ^1 _{S/k}   \otimes H^1 _{DR} (Y_{\alpha})$$
which satisfies Griffiths transversality. In this case we get a map
\[
F^1 = H^0 (Y_{\alpha}, \Omega ^1 _{Y_{\alpha}})    \to F^0/F^1=  H^1 (Y_{\alpha}, \mathcal{O} _{Y_{\alpha}}) ,
\]
which is known to be cup-product with the Kodaira-Spencer class $\kappa (\alpha )\in H^1 (Y_{\alpha}, \, T_{Y_{\alpha}})$:
\[
 \Omega ^1 _{Y_{\alpha}}\otimes T_{Y_{\alpha}}\to \mathcal{O} _{Y_{\alpha}}:
H^0 (Y_{\alpha}, \Omega ^1 _{Y_{\alpha}}) \otimes H^1 (Y_{\alpha}, \, T_{Y_{\alpha}}) \to
H^1 (Y_{\alpha}, \mathcal{O} _{Y_{\alpha}}).
\]
To do this, we need representatives of the cohomology classes. We get these by considering the canonical models of our
genus 3 curves, which are projective quartics $(F_{\alpha}=0 )\subset \Proj ^2$. We have an isomorphism given by
Poincar\'e residue:
\[
H^2 _{DR} (\Proj ^2 - F_{\alpha}) \cong H^1 _{DR} (F_{\alpha}) .
\]
Griffiths showed that we can express the Hodge filtration of the (primitive) cohomology of a smooth projective hypersurface
in terms of the Jacobian ring of the hypersurface (see his papers, \cite{Gr}). In our case, this means that we have isomorphisms:

\[
R_1 \cong H^0 (Y_{\alpha}, \Omega ^1 _{Y_{\alpha}}) , R_5 \cong H^1 (Y_{\alpha}, \mathcal{O} _{Y_{\alpha}}) , \quad
R := k[x, y, z]/\left \langle \frac{\partial F_{\alpha}}{\partial x} ,    \frac{\partial F_{\alpha}}{\partial y} , \frac{\partial F_{\alpha}}{\partial z}   \right \rangle,
\]
with $x,y,z$ coordinates in projective space, and subscripts denoting the homogeneous components of that degree. Explicitly this means
that we can take
\[
\omega _1 = \frac {x \Omega}{F_{\alpha}}, \, \, \omega _2 = \frac {y \Omega}{F_{\alpha}}, \, \,  \omega _3 = \frac {z \Omega}{F_{\alpha}},
\quad \Omega = x dydz - ydxdz + zdxdy,
\]
as a basis of $F^1 = H^0 (Y_{\alpha}, \Omega ^1 _{Y_{\alpha}})  $, and

\[
\eta _1 = \frac {r_1 \Omega}{F_{\alpha}^2}, \, \, \eta _2 = \frac {r_2 \Omega}{F_{\alpha}^2}, \, \, \eta _3 = \frac {r_3 \Omega}{F_{\alpha}^2}, \quad
\text{for\  a basis \ } r_1, r_2, r_3 \text{\ of\ } R_5
\]
as a basis of $F^0/F^1 = H^1 (Y_{\alpha}, \mathcal{O} _{Y_{\alpha}})  $. We then compute the derivatives
$\partial \omega _i/\partial \alpha = g\Omega /F_{\alpha}^2$ and express these as linear combinations of the
$\omega _i,  \eta _j$ modulo exact forms. This is equivalent to computing the division of the polynomial $g$ with
respect to the ideal $\langle \frac{\partial F_{\alpha}}{\partial x} ,    \frac{\partial F_{\alpha}}{\partial y} , \frac{\partial F_{\alpha}}{\partial z}   \rangle$.
In fact, the remainder of division of $g$ by the Jacobian ideal will be in the shape $a_1 r_1 + a_2 r_2+a_3r_3$ and the deformation class is represented
by the differential form $a_1 \eta _1 + a_2\eta _2+a_3\eta _3$. Differentiating the classes
$\omega _1, \omega _2, \omega _3$ this way, for $\beta$ in a neighborhood of a fixed value $\alpha$, we get  a 3 by 3 matrix with entries in $k$.
Applied to our family, we chose $\alpha = (0, 1, 1, 0)$ and four deformations $\beta = (s, 1, 1, 0)$,   $\beta = (0, 1+t, 1, 0)$,
$\beta = (0, 1, 1+u, 0)$, $\beta = (0, 1, 1, v)$. Equations for these curves can be found in Appendix. %section \ref{SS:explicit2}.
For each of these, Magma calculated the canonical models of the curves in our family, giving a one-parameter family of smooth projective plane quartics
$F_{\beta}(x, y, z)$ to which we applied the above procedure. In this way we got 4 by 4 matrices with entries in $\QQ$, representing the
deformation classes in these 4 directions. These 4 matrices spanned a space of dimension 3, as expected. We have thus shown that the Jacobian has 
maximal rank in a specific point. It therefore has maximal rank in a Zariski open neighborhood of this point, and thus the image will contain a smoothly 
embedded locally closed subvariety of dimension 3 in $\mathfrak{M}_3$.

%\subsection{\rm{\!\!}}\label{SS:moduli4}

Another way to prove that this family has 3-dimensional moduli is the following: We can directly calculate the genus two curves
$y^2 = f_{\alpha}(x)$ and their dependence on parameters $\alpha = (s, t, u, v)$. From this we calculate the Igusa invariants
of the sextic polynomials $ f_{\alpha}(x)$. The function field of $ \mfr{M}_2$ is a rational function field on three independent
variables $j_1,j_2, j_3$. These functions in turn are explicit rational expressions in the Igusa invariants, and in this was we get
$j_1,j_2, j_3$ as rational functions of the parameters $s, t, u, v$. We compute the Jacobian matrix of $j_1,j_2, j_3$ with respect
to $s, t, u, v$ in a specific point  $s_0, t_0, u_0, v_0$ and find that it has rank 3. This is the rank of the tangent map of the
space of our parameters $s, t, u, v$ to the 3-dimensional moduli space $ \mfr{M}_2$. It has maximal rank at this one point, and therefore
in a Zariski neighborhood of this point. Thus the map from $(s, t, u, v)$-space  to  $ \mfr{M}_2$ has dense image. On the other hand, Ellenberg showed in \cite{JE} that the map from the moduli space of 6 distinct points in $\Proj ^1$ to the moduli space of genus 3
curves with $\QQ (\zeta _7 ^+)$-endomorphisms given by this construction also had maximal rank. The moduli space
of 6 distinct points in $\Proj ^1$ covers $ \mfr{M}_2$, since it is the moduli space of genus 2 curves with a level 2 structure. Therefore we again see that our construction leads to a family of genus 3 curves with 3 independent moduli.

\section{A polarization for our family}
\label{S:pol}
The genus 8 curves $X$ in our  family have a $D_7$-action. We will show how to construct a $D_7$-invariant polarization on the
Hodge structure $H^1(X)$. Recall that the representation of $D_7$ on the 16-dimensional space  $H^1(X, \QQ)$ is given by
$4\, \mr{alt}+2\alpha$ where $\alpha = \chi_1+\chi_2+\chi_3$. In fact the piece corresponding to $2\alpha$ is the motive
$ M= H^1(X)/H^1(C)$, where  $X\to C$ is the degree 7 unramified projection to the genus 2 curve $C$. It will suffice to put a $D_7$-invariant
principal polarization on $M$. As a $D_7$-module, $M = K^2$, where $K= \QQ (\zeta _7)$, where $s (x) = \zeta _7 x$,
$t (x) = \overline{x}$, complex conjugation.

\begin{proposition}
\label{P:10}
Let $K^2$ be as above. Consider the lattice $L = \ZZ_K \oplus P_7$, where $ \ZZ_K = \ZZ[\zeta_7]$ is the ring of integers
in $K$, and $P_7  =  w \ZZ_K, w = 1 - \zeta _7$,  is the unique prime ideal above $7$. Let $v= w \overline{w}$,
and $d^+$ the generator of the different ideal $\mathfrak {d}_{K^+/\QQ}$ of the real subfield
$K^+ = \QQ(\zeta _7+\overline{\zeta}_7)$, that is
\[
d^+ =  (\zeta _7+\overline{\zeta}_7)^2 + 3(\zeta _7+\overline{\zeta}_7) -3.
\]
The lattice $L$ is $D_7$-invariant.
Consider the bilinear form on $K^2$ given by the formula
\[
\langle  x   , y    \rangle  =
\langle  (x_1, x_2)   , (y_1, y_2 )    \rangle
 = \frac{1}{7} \mr{Tr}_{K/\QQ} \left (
 \frac{v^2(x_1 \overline{y}_2 -x_2 \overline{y}_1  )}{d^+}
 \right ).
\]
This bilinear form is alternating, $D_7$-invariant. Moreover, its restriction to the lattice $L$ is $\ZZ$-valued and with
elementary divisors all equal to $1$.
\end{proposition}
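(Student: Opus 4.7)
The plan is to verify the five claims in order: $D_7$-stability of the lattice $L$, the alternating property, $D_7$-invariance of the form, $\ZZ$-valuedness on $L$, and unimodularity. The first three are formal, being consequences of the observation that $v = w\bar{w}$ and $d^+$ both lie in $K^+$, so the scalar $v^2/d^+$ is fixed by complex conjugation, combined with the Galois-invariance of $\mr{Tr}_{K/\QQ}$. Concretely: $\ZZ_K$ is a ring containing $\zeta_7$ and is stable under $\mr{Gal}(K/\QQ)$, while $P_7$ is the unique prime of $\ZZ_K$ above $7$ and hence is Galois-stable and stable under multiplication by the unit $\zeta_7$. For the alternating property, the identity
\[ \overline{x_1\bar{y}_2 - x_2\bar{y}_1} \;=\; -(y_1\bar{x}_2 - y_2\bar{x}_1), \]
combined with $\mr{Tr}_{K/\QQ}(v^2 z/d^+) = \mr{Tr}_{K/\QQ}(v^2 \bar z/d^+)$, forces $\langle y,x\rangle = -\langle x,y\rangle$; the $s$-invariance is immediate from $\zeta_7 \overline{\zeta}_7 = 1$, and the $t$-invariance reduces to the same trace identity after conjugating the argument.

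The remaining two assertions reduce to valuation bookkeeping at the unique prime $\mfr{p} = P_7$ of $K$ above $7$. Since $K/K^+/\QQ$ is totally ramified at $7$ with ramification indices $2$ and $3$, both tame, we have $\mfr{d}_{K/K^+} = \mfr{p}$ and $\mfr{d}_{K^+/\QQ} = \mfr{P}^2$ (with $\mfr{P}$ the prime of $K^+$ above $7$), hence $\mfr{d}_{K/\QQ} = \mfr{p}^5$; moreover $7\ZZ_K = \mfr{p}^6$, $v\ZZ_K = \mfr{P}\ZZ_K = \mfr{p}^2$, and $d^+ \ZZ_K = \mfr{p}^4$. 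For $x,y \in L$ one has $z := x_1\bar{y}_2 - x_2\bar{y}_1 \in \bar{P}_7 = \mfr{p}$, so
\[
v_{\mfr{p}}\bigl(v^2 z / (7 d^+)\bigr) \;\geq\; 4 + 1 - 6 - 4 \;=\; -5,
\]
and the element is integral at every other prime. Hence $v^2 z/(7 d^+) \in \mfr{p}^{-5} = \mfr{d}_{K/\QQ}^{-1}$, so by the defining property of the codifferent $\mr{Tr}_{K/\QQ}(v^2 z/(7d^+)) \in \ZZ$; i.e.\ $\langle x,y\rangle \in \ZZ$.

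For the elementary-divisor claim, I introduce the auxiliary $\QQ$-bilinear form $\phi(a,b) := \frac{1}{7}\mr{Tr}_{K/\QQ}(v^2 a\bar{b}/d^+)$ on $K \times K$, so that $\langle (x_1,x_2),(y_1,y_2)\rangle = \phi(x_1,y_2) - \phi(x_2,y_1)$. With $\ZZ$-bases $\{e_i\}$ of $\ZZ_K$ and $\{f_j\}$ of $P_7$, the Gram matrix of $\langle\cdot,\cdot\rangle$ on $L$ has the block form
\[
\begin{pmatrix} 0 & A \\ -A^{\top} & 0\end{pmatrix}, \qquad A_{ij} = \phi(e_i, f_j),
\]
so it suffices to prove that $\phi: \ZZ_K \times P_7 \to \ZZ$ is a perfect pairing, i.e.\ $A \in \mr{GL}_6(\ZZ)$. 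Under the trace-form isomorphism $\mr{Hom}_{\ZZ}(\ZZ_K,\ZZ) \cong \mfr{d}_{K/\QQ}^{-1} = \mfr{p}^{-5}$, the adjoint of $\phi$ is the map $b \mapsto v^2\bar{b}/(7 d^+)$, i.e.\ conjugation followed by multiplication by the fixed element $v^2/(7 d^+) \in K^{\times}$, which is a unit at every prime other than $\mfr{p}$ and has $\mfr{p}$-valuation $-6$; it therefore carries $\mfr{p} = P_7$ bijectively onto $\mfr{p}^{-5}$. Hence $\phi$ is perfect, $A$ is unimodular, and the block matrix above is $\ZZ$-equivalent to the standard symplectic form, so every elementary divisor of $\langle\cdot,\cdot\rangle|_L$ equals $1$. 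The principal technical obstacle is the $\mfr{p}$-adic valuation bookkeeping in the last two paragraphs: once $v_{\mfr{p}}(v)=2$, $v_{\mfr{p}}(7)=6$, $v_{\mfr{p}}(d^+)=4$ are in hand via the tame-ramification formula in the tower $K/K^+/\QQ$, both integrality and unimodularity follow formally from the identification of $\mfr{d}_{K/\QQ}^{-1}$ with the $\ZZ$-dual of $\ZZ_K$ under the trace form.
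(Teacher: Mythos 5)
Your proof is correct, and for the two substantive claims it takes a genuinely different route from the paper. The first three assertions ($D_7$-stability of $L$, the alternating property, $D_7$-invariance) are handled in the paper exactly as you do, by noting $v^2/d^+\in K^+$ and using Galois-invariance of the trace. For integrality and unimodularity, however, the paper simply computes the $12\times 12$ Gram matrix of the form in an explicit $\ZZ$-basis of $L$ and has Magma return its Frobenius normal form, observing that the elementary divisors are all $1$; no theoretical argument is given. You replace this with a conceptual proof: integrality because $v^2z/(7d^+)\in\mfr{p}^{-5}=\mfr{d}_{K/\QQ}^{-1}$ (using tame ramification in the tower $K/K^+/\QQ$ to get $v_{\mfr{p}}(v)=2$, $v_{\mfr{p}}(d^+)=4$, $v_{\mfr{p}}(7)=6$), and unimodularity by reducing the block matrix $\left(\begin{smallmatrix}0 & A\\ -A^{\top} & 0\end{smallmatrix}\right)$ to the perfectness of the pairing $\phi:\ZZ_K\times P_7\to\ZZ$, identified via the trace form with the fact that multiplication by $v^2/(7d^+)$ (composed with conjugation) carries $\mfr{p}$ isomorphically onto $\mfr{p}^{-5}$. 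Your approach buys an explanation of \emph{why} the normalizing factors $v^2$, $7$, $d^+$ make the form unimodular, is machine-free, and would adapt to other primes; the paper's approach is a direct verification with no insight but also no bookkeeping to check. One small point you should make explicit: the assertion that $v^2/(7d^+)$ is a unit away from $\mfr{p}$ requires that $v$ generate $\mfr{P}$ \emph{exactly}, which follows from $N_{K^+/\QQ}(v)=N_{K^+/\QQ}(2-\zeta_7-\overline{\zeta}_7)=-7$; together with the hypothesis that $d^+$ generates $\mfr{d}_{K^+/\QQ}=\mfr{P}^2$, this closes the argument.
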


\begin{proof}
The lattice $L$ is a $D_7$ module: it is a sum of ideals, so stable under multiplication by $\zeta_7$; but is also invariant
by complex conjugation. This is clear for $\ZZ_K$, and since $P_7$ is the unique ideal above 7, $P_7$ is also stable
by complex conjugation.
Note that $c = v^2/d^+ \in K^+$, so it is invariant under complex conjugation. Complex conjugation belongs to the Galois group
of $K/\QQ$, so the trace map is invariant under complex conjugation. Then
\begin{align*}
7 \langle  x   , y    \rangle &= \mr{Tr}_{K/\QQ} \left ( c (x_1 \overline{y}_2 -x_2 \overline{y}_1  )\right )= \mr{Tr}_{K/\QQ} \left (\overline{ c (x_1 \overline{y}_2 -x_2 \overline{y}_1  )}\right )
=\mr{Tr}_{K/\QQ} \left ( c (y_2 \overline{x}_1 -y_1 \overline{x}_2  )\right )\\
&=-\mr{Tr}_{K/\QQ} \left ( c (y_1 \overline{x}_2 -y_2 \overline{x}_1  )\right ) = -7 \langle  x   , y    \rangle,
\end{align*}
so it is alternating. To see that it is $D_7$ invariant, we check the generators.
\[
7 \langle \zeta _7 x   , \zeta _7y    \rangle
= \mr{Tr}_{K/\QQ} \left ( c\zeta _7 \overline{\zeta}_7 (x_1 \overline{y}_2 -x_2 \overline{y}_1  )\right )
= \mr{Tr}_{K/\QQ} \left ( c (x_1 \overline{y}_2 -x_2 \overline{y}_1  )\right ) = 7 \langle x   , y    \rangle .
\]
Also
\begin{align*}
7 \langle  \overline{x}   , \overline{y}    \rangle &= \mr{Tr}_{K/\QQ} \left ( c (y_2 \overline{x}_1 -y_1 \overline{x}_2  )\right )= \mr{Tr}_{K/\QQ} \left (\overline{ c (y_2 \overline{x}_1 -y_1 \overline{x}_2  )}\right )=\mr{Tr}_{K/\QQ} \left ( c (x_1 \overline{y}_2 -x_2 \overline{y}_1  )\right )= 7 \langle  x   , y    \rangle.
\end{align*}
To see that $ \langle  x   , y    \rangle$ is $\ZZ$-valued on the lattice $L$, one computes the matrix of the bilinear
form in a $\ZZ$-basis of $L$. Magma then calculates the Frobenius normal form for this $\ZZ$-valued alternating form and finds a
standard alternating matrix with elementary divisors all equal to 1.
\end{proof}

%%%%%%%%%%%%%%%%%%%%%%%%%%%%%%%%%%%%%%
%%%%%%%%%%%%%%%%%%%%%%%%%%%%%%%%%%%%%
\section{Finding polynomials for unramified degree $7$  covering  of genus $2$ curves}
\label{S:polys}
%\subsection{Find a moduli of unramified covering}\label{SS:polys1}
Consider the curve \[ h(x)^2-s(x)q(x)^2=(-g(x))^7, \quad
\text{where } \deg h=7, \, \deg s= 6, \, \deg q =4, \, \deg
g=2.\]To simplify our notation, denote the above curve as
\[ S_7^2 - F_6  \cdot Q_4^2 = R_2^7, \]
where  $S_7, F_6, Q_4, R_2$ are binary forms in $X$ and $Z$ with
the subscripts as the degrees,  and  the coefficients in an
algebraically closed field $\overline{K}$ with $\Char{K} \neq 2,
7$.

Our purpose is to find a dimension 3 moduli of unramified degree 7
covering
\[ W^7 = S_7 + Y \cdot Q_4, \quad \text{of genus 2 curves }
\, \,  Y^2 = F_6 .\]

\begin{remark}
 \label{R:2}
 To obtain the solution, we restrict our attention
to the curves that \item[1. ] $F_6$ must be square free in order
for the curve $Y^2=F_6$ to be of genus  $2$,

Moreover, $F_6$ does not have repeated roots, and  $\disc(F_6)
\neq 0$, the discriminant of $F_6$ is non-zero.

\item[2. ] $S_7 + Y \cdot Q_4$ must not be a power of $X,Y,Z$ in
order to have a degree $7$ covering.

\item[3. ] $\gcd(S_7^2, \,  F_6 \cdot Q_4^2) = 1$ or equivalently,
$\gcd( F_6 \cdot Q_4^2, \, R_2^7)=1$ in order for the covering to
be unramified.
\end{remark}

\begin{proof} To prove the last remark, we note that
for the covering to be unramified, the order of the common divisor
of ${S_7 + Y \cdot Q_4,  \, S_7 - Y \cdot Q_4} $ is either $0$ or
$7$ at any given point on the genus 2 curve.    This implies
 $\gcd(S_7^2, \,  F_6 \cdot Q_4^2)$ is either a constant, or  a $7$th power of a linear or a quadratic
factor. But since $F_6$ is square free, hence, it is impossible
for $\gcd(S_7^2, \,  F_6 \cdot Q_4^2)$ to be a $7$th power of a
quadratic factor.  Thus, $\gcd(S_7^2, \,  F_6 \cdot Q_4^2)$ is
either a constant, or  a $7$th power of a linear factor.

In the case of $7$th power of a linear factor, we can write
\[ S_7 = S_3 \cdot L_1^4 ,\quad F_6 = F_5 \cdot L_1 ,\quad Q_4 = Q_1
 \cdot L_1^3 ,\quad R_2 = R_1 \cdot  L_1, \quad \text{so that } \quad S_3^2 \cdot L_1 - F_5 \cdot Q_1^2 = R_1^7\]
where the  subindexes stand for the degrees. Perform a  linear
transformations, let \[ X=R_1 ,\quad Z =L_1,\quad  \text{ so that
} \quad  Q_1 = X - Z.\] Let $s_7(X) = S_7(X, 1) , \,  f_5(X) =
F_5(X, 1)$, then
\[ S_3^2 \cdot L_1 - F_5 \cdot
Q_1^2 = R_1^7  , \quad \Rightarrow \quad  s_3^2 (X) - f_5(X) \cdot
(X-1)^2 = X^7 .\]  And we have
\[s_3(1)^2 - f_5(1) \cdot (1-1)^2 = 1^7 \quad \Rightarrow \quad s_3 (1)^2 =
1 \quad \Rightarrow \quad s_3(1) = \pm1. \] Moreover, taking the
derivatives with respect to $X$, we have
\[ 2s_3(X) \cdot s'_3(X) - f'_5(X) \cdot (X-1)^2 -2f_5(X)(X-1)  =7 X^6
\quad \Rightarrow \quad  s_3 (1)\cdot s_3' (1) = 7, \quad
\Rightarrow \quad s_3'(1) = \pm 7.\] The  cubic polynomials $s_3$
form a 4 dimensional space, and the equations $s_3(1)=\pm 1$ and
$s_3'(1)=\pm 7$ determine a two dimensional subspace of $s_3$.
Since our goal is to find the dimension 3 moduli of unramified
degree 7 covering, we will not consider this case.

Therefore, we focus on the case $\gcd(S_7^2, \,  F_6 \cdot Q_4^2)
= 1$ or equivalently, $\gcd( F_6 \cdot Q_4^2, \, R_2^7)=1$.
\end{proof}

\begin{theorem}
\label{T:1}

\[ S_7(X,1)=\sum_{i=1}^4  \prod_{j=1, j \neq i}^4 \left( \dfrac{X-\beta^2_j}{\beta^2_i-\beta^2_j} \right)^2
\left( \beta_i^7 +  \left( \frac{7}{2}  \beta_i^5  - 2 \beta_i^7
l'_i(\beta_i^2) \right) \left(X-\beta_i^2 \right) \right), \text{
where } l_i(x) =\prod_{j=1, j \neq i}^4
\dfrac{x-\beta^2_j}{\beta^2_i-\beta^2_j},\] is a solution for  a
dimension $3$ moduli of unramified degree $7$ covering
\[ W^7=S_7 + Y \cdot Q_4, \quad \text{of genus $2$ curves }
\, \,  Y^2 = F_6 , \] where $\beta^2_i$ are distinct roots for
$Q_4$.
\end{theorem}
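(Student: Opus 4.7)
My plan is to identify the system of conditions on $s_7(X) := S_7(X,1)$ set up in the paragraph preceding the theorem as a classical Hermite interpolation problem at the four distinct nodes $\beta_1^2,\ldots,\beta_4^2$, and then simply write down the Hermite formula.

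First I would perform the reduction that is already sketched in the paper. Normalize $R_2 = XZ$, set $Z = 1$, and write $Q_4(X,1) = \prod_{i=1}^4(X-\beta_i^2)$ with the roots distinct (this is the generic case, and the excluded degenerations are discussed in Remark \ref{R:2}). The defining equation $S_7^2 - F_6 Q_4^2 = R_2^7$ becomes
\[
s_7(X)^2 - X^7 = f(X)\,\prod_{i=1}^{4}(X-\beta_i^2)^2
\]
for some polynomial $f$ of degree $\le 6$. Evaluating at $X = \beta_j^2$ gives $s_7(\beta_j^2)^2 = \beta_j^{14}$; choosing the branch $s_7(\beta_j^2) = +\beta_j^7$ (as in the statement) and then differentiating once and re-evaluating at $X = \beta_j^2$ yields $s_7'(\beta_j^2) = \tfrac{7}{2}\beta_j^5$, for $j = 1,\dots,4$. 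Thus we are asked for a polynomial of degree at most $7$ realising $8$ prescribed data: values $y_j = \beta_j^7$ and derivatives $y_j' = \tfrac{7}{2}\beta_j^5$ at the four distinct nodes $\beta_j^2$.

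The next step is to invoke Hermite interpolation. At $n = 4$ distinct nodes with prescribed values and first derivatives, the unique polynomial of degree $\le 2n - 1 = 7$ is
\[
p(x) = \sum_{i=1}^{4}\Bigl[\bigl(1 - 2(x-\beta_i^2)\,l_i'(\beta_i^2)\bigr)\,l_i(x)^2\,y_i \;+\; (x-\beta_i^2)\,l_i(x)^2\,y_i'\Bigr],
\]
with $l_i$ the $i$-th Lagrange polynomial attached to $\beta_1^2,\ldots,\beta_4^2$. Substituting $y_i = \beta_i^7$, $y_i' = \tfrac{7}{2}\beta_i^5$, and factoring the common $l_i(x)^2$ out of each summand collapses this to exactly the expression displayed in the theorem. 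Having $s_7$, the quotient $f(X) := (s_7^2 - X^7)/\prod_i(X-\beta_i^2)^2$ is automatically a polynomial of degree $\le 6$, since the divisibility is enforced at both the value and derivative level at each node; for $(\beta_1,\ldots,\beta_4)$ in a Zariski-dense open subset of $\mathbb{A}^4$ one then verifies the standing genericity hypotheses of Remark \ref{R:2} — namely that $f$ is squarefree of degree exactly $6$ and coprime to $X\cdot Q_4$ — so $Y^2 = f(X)$ is a smooth genus $2$ curve and $W^7 = S_7 + Y Q_4$ yields the required unramified degree $7$ cover.

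The main delicate point, rather than an obstacle, is the sign ambiguity. A priori one could make independent choices $s_7(\beta_j^2) = \pm\beta_j^7$ at each node, giving $2^4$ Hermite interpolants. However, since $Q_4$ depends only on the $\beta_j^2$, the substitution $\beta_j \mapsto -\beta_j$ is a symmetry of the whole problem that simultaneously flips the signs of $y_j = \beta_j^7$ and $y_j' = \tfrac{7}{2}\beta_j^5$; thus every other sign choice at node $j$ is recovered from the uniform $+$ choice by relabelling $\beta_j \mapsto -\beta_j$. Modulo this trivial $(\ZZ/2)^4$-action on the parameters $(\beta_1,\dots,\beta_4)$, the formula in the theorem captures the entire $4$-parameter family, and no generality is lost.
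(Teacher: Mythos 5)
Your proposal is correct and follows essentially the same route as the paper: normalize $R_2 = XZ$, read off the value and derivative conditions $s_7(\beta_i^2)=\beta_i^7$, $s_7'(\beta_i^2)=\tfrac72\beta_i^5$ at the four distinct roots of $Q_4$, and write down the Lagrange form of the Hermite interpolant; the sign choice is absorbed into the choice of square root $\beta_i$ of each root of $Q_4$ exactly as in the paper. Your added remarks on the $(\ZZ/2)^4$ sign symmetry and the genericity of $F_6$ are consistent with (and slightly more explicit than) the paper's treatment.
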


\begin{proof}
Apply a linear transformation so that $W=X\cdot Z$, and
\[
S_7^2 - F_6 \cdot Q_4^2 = W^7  \quad \Rightarrow \quad S_7^2 - F_6
\cdot Q_4^2 = (X\cdot Z)^7 .\] By Remark \ref{R:2}, %\ref{R:restriction}
$\gcd( F_6 \cdot Q_4^2, \, R_2^7)= \gcd( F_6 \cdot Q_4^2, \, W^7)=
\gcd( F_6 \cdot Q_4^2, \, (X \cdot Z)^7)=1$,  up to a constant
coefficient, $Q_4$ must be of the following form
\[ Q_4 = \prod_{i=1}^4 ( X - \alpha_i Z), \quad \alpha_i \neq 0,
\, \, i = 1, 2,3, 4.\] Let $s_7(X) = S_7(X, 1)$, then
\[ S_7^2 - F_6 \cdot Q_4^2 = R_2^7 , \quad \Rightarrow \quad
s_7^2 (X) - F_6(X,1)\prod_{i=1}^4 ( X - \alpha_i )^2 =X^7 .\]
Then, the polynomial $s_7$ satisfies the following system of
equations:
\[s_7^2(\alpha_i) = \alpha_i^7 ,\quad 2 \cdot s_7(\alpha_i)\cdot
s_7'(\alpha_i) = 7 \cdot\alpha_i^6 ,\quad
 i = 1, 2, 3, 4.\]
Choose a set of suitable $\beta_i$ such that $\beta_i^2= \alpha_i$
for $i = 1, 2, 3, 4$, then
\[ s_7(\beta_i^2) = \beta_i^7 ,\quad 2 \cdot s_7'(\beta_i) = 7 \cdot \beta_i^5
,\quad  i = 1, 2, 3, 4.\] This is a problem of Hermite
interpolation.
If we are to interpolate a given function $f(x)$ with a polynomial $p(x)$
so that
\[ p(x_i)=f(x_i), \, p'(x_i)=f'(x_i), \, 0 \leq i \leq n,\]
then the  Lagrange form of the Hermite interpolation polynomials
is given by
\begin{eqnarray*} p(x) &= & \sum_{i=0}^n f(x_i) l_i^2(x)\left(  1-2 l'_i(x_i) (x-x_i) \right)
+ \sum_{i=0}^n f'(x_i) l_i^2(x)(x-x_i), \\
& = &  \sum_{i=0}^n  l_i^2(x) \left( f(x_i)   + \left(  f'(x_i) -2
f(x_i) l'_i(x_i) \right) \left(x-x_i \right) \right),  \,  \,
\text{ where } \, \, l_i(x) =\prod_{j=0, j \neq i}^n
\dfrac{x-x_j}{x_i-x_j}.\end{eqnarray*}

Apply this result in our situation, we obtain for $i=1, 2, 3, 4$,
\[ x_i=\beta^2_i, \, \, p(x_i) = s_7(\beta_i^2), \, \,  f(x_i)=  \beta_i^7,  \, \, f'(x_i) = \frac{7}{2}  \beta_i^5,  \, \,
l_i(x) =\prod_{j=1, j \neq i}^4
\dfrac{x-\beta^2_j}{\beta^2_i-\beta^2_j}\]
\[ S_7(X,1)=s_7(X)=\sum_{i=1}^4  \prod_{j=1, j \neq i}^4 \left( \dfrac{X-\beta^2_j}{\beta^2_i-\beta^2_j} \right)^2
\left( \beta_i^7 +  \left( \frac{7}{2}  \beta_i^5  - 2 \beta_i^7
l'_i(\beta_i^2) \right) \left(X-\beta_i^2 \right) \right) . \]
\end{proof}

%\subsection{Equation of Covering by Kummer Theory}\label{SS:polys2}
\begin{theorem}
\label{T:2}

With above notation,
the degree $7$ unramified cover of the curve
$Y^2 = F_6(X,Z)$
has the following defining equation:
\[    T^7 - 7 T^5 X  Z+ 14 T^3 X^2 Z^2 - 7 T X^3  Z^3 -2 S_7(X,Z).\]
In the affine coordinates, \[    K(X, Y, T) = K(X, Y, W), \quad \text{and}
\quad    [K(X, T): K(X)] = 7. \]
\end{theorem}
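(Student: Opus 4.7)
The plan is to derive the displayed equation for $T$ by exhibiting $T$ explicitly inside $K(X,Y,W)$ and invoking Newton's power-sum identities. Starting from $W^7 = S_7 + Y Q_4$, I would introduce a seventh root $W'$ of $S_7 - Y Q_4$, normalized so that $W W' = XZ$. This is possible because $(WW')^7 = S_7^2 - F_6 Q_4^2 = (XZ)^7$, so after rescaling $W$ by a suitable seventh root of unity we may take $\eta = 1$ in the identity $WW' = \eta XZ$. Then $W$ and $W'$ are the two roots of $u^2 - T u + XZ = 0$ in the auxiliary variable $u$, where $T := W + W'$.

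The computational core is to expand $p_7 := W^7 + (W')^7$ as a polynomial in $T$ and $XZ$ using the Newton recursion $p_k = T\, p_{k-1} - XZ\, p_{k-2}$ (which follows from the quadratic satisfied by $W$ and $W'$), with initial values $p_0 = 2$ and $p_1 = T$. Six iterations yield
\[
p_7 = T^7 - 7\,T^5 XZ + 14\,T^3 (XZ)^2 - 7\,T (XZ)^3.
\]
On the other hand, $p_7 = (S_7 + YQ_4) + (S_7 - YQ_4) = 2 S_7$, producing the claimed polynomial identity. For the equality $K(X,Y,T) = K(X,Y,W)$, the inclusion $K(X,Y,T) \subseteq K(X,Y,W)$ is immediate since $T \in K(X,Y,W)$; for the reverse, I would run the parallel recursion on differences $q_k = W^k - (W')^k = (W - W')\,R_k(T,XZ)$ with $R_k$ determined by $R_0 = 0$, $R_1 = 1$, and $R_k = T R_{k-1} - XZ R_{k-2}$. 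Specializing to $k=7$ and using $q_7 = 2 Y Q_4$ gives $W - W' = 2 Y Q_4 / R_7(T,XZ) \in K(X,Y,T)$, whence $W = \tfrac{1}{2}(T + (W - W')) \in K(X,Y,T)$.

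For the degree assertion $[K(X,T):K(X)] = 7$, I would argue by Galois theory. By Proposition~\ref{P:2}, $K(X,Y,W)/K(X)$ is Galois of degree $14$ with group $D_7$, and the involution $t$ interchanging $W$ and $W'$ fixes $T$; hence $K(X,T) \subseteq K(X,Y,W)^{\langle t\rangle}$, a field of degree $7$ over $K(X)$. To obtain equality, observe that the seven $\langle s\rangle$-conjugates $\zeta_7^k W + \zeta_7^{-k} W'$ (for $k = 0, \ldots, 6$) are generically distinct roots of the degree seven polynomial derived above, and $D_7$ acts transitively on the cosets $D_7/\langle t\rangle$, so the polynomial is irreducible over $K(X)$. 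The main obstacle I anticipate is the careful bookkeeping of the normalization $W W' = XZ$ and the constants produced by the recursion; a secondary point is that the Galois argument requires that $\varphi = S_7 + Y Q_4$ is not a seventh power in $K(X,Y)$, which is exactly Proposition~\ref{P:2}(1) and is generically ensured by the hypotheses that $\gcd(F_6 Q_4^2, R_2^7) = 1$ and that $F_6$ is squarefree.
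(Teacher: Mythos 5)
Your proof is correct, but it diverges from the paper's at both key steps, so a comparison is in order. For the polynomial identity, the paper eliminates $Y$ to get $W^{14} - 2S_7 W^7 + X^7Z^7 = 0$ and then expands the full product $\prod_{i=0}^{6}(T - \zeta_7^i W - \zeta_7^{-i}\bar{W})$ by computer algebra (Mathematica); your Newton/Chebyshev recursion $p_k = T\,p_{k-1} - XZ\,p_{k-2}$ obtains the same identity by hand in six lines and is the cleaner route --- the two are equivalent because the displayed degree-$7$ polynomial is exactly that product. For the field-theoretic claims the divergence is more substantial: the paper works in an abstract Kummer-theoretic setting ($K_1$, its quadratic extension $K_2$, $\Omega^7=\omega$ with norm $\omega\bar\omega=\rho^7$), shows $K_2(\zeta_7,T)=K_2(\zeta_7,\Omega)$, and extracts $[K_2(T):K_2]=7$ from the divisibility $7\mid [K_2(T):K_2]$ together with the existence of the degree-$7$ polynomial over $K_2$; this has the virtue of not assuming $\zeta_7$ lies in the base field, which matters for the paper's subsequent discussion of descending to cyclic degree-$7$ extensions of $\QQ(X,Y)$. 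You instead exploit the $D_7$-Galois structure already established in Proposition \ref{P:2}: $T$ is fixed by $t$ and has the seven conjugates $\zeta_7^k W+\zeta_7^{-k}W'$, so its stabilizer is exactly $\langle t\rangle$ and the degree is $7$. That is shorter and perfectly valid in the theorem's setting (constants containing $\zeta_7$). Two small points you should tighten: ``generically distinct'' can and should be ``always distinct,'' since $\zeta_7^jW+\zeta_7^{-j}W'=\zeta_7^kW+\zeta_7^{-k}W'$ with $j\neq k$ forces $W^7=(W')^7$, i.e.\ $YQ_4=0$, impossible in the function field; and for the same reason $R_7(T,XZ)=\prod_{k=1}^{6}(W-\zeta_7^kW')$ is nonzero, which must be noted before you divide by it to recover $W-W'$.
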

\begin{proof}
Consider the equation $W^7 = S_7(X, Z) + Y \cdot Q_4(X,Z)$
where $Y^2 = F_6(X,Z).$  This is the degree $7$ unramified cover of the curve
$    Y^2 = F_6(X, Z).$
Eliminate $Y$, we get
\[    (W^7 - S_7(X, Z))^2 - F_6(X,Z) \cdot Q_4(X,Z)^2 = 0,\] which is equivalent to
\begin{equation*}\label{genus8}
    W^{14} - 2S_7(X, Z) W^7 +X^7 Z^7 = 0, \quad \text{where the coefficients are rational functions of
$\beta_i$ ($i=1, 2,..., 4$).}
\end{equation*}
Consider the polynomial
\[ J =  \prod_{i=0}^6 (T - \zeta_7^i \cdot W - \zeta_7^{- i} \cdot \bar{W}) , \quad \text{ where
$W \bar{W} = X.$} \]
With Mathematica, we calculate:
\[    J = T^7 - 7 T^5 X  Z+ 14 T^3 X^2 Z^2 - 7 T X^3  Z^3 -2 S_7(X,Z).\] Hence, the degree $7$ unramified cover of the curve $   Y^2 = F_6(X,Z)$
has the following defining equation:
\[    T^7 - 7 T^5 X  Z+ 14 T^3 X^2 Z^2 - 7 T X^3  Z^3 -2 S_7(X,Z). \]
We claim that in the affine coordinates, we have
\[    K(X, Y, T) = K(X, Y, W), \quad \text{ and } \quad     [K(X, T): K(X)] = 7. \]
First, let $K_1$ be a field of characteristic other than $7$ and  $K_2$
its quadratic extension. Let $\omega$ be a non-zero element of $K_2$ such that its norm
$\omega\bar\omega$ to $K_1$ equals to a $7$th power $\rho^7$ with
$\rho\in K_1$.  Let \[ \Omega= \sqrt[7]{\omega} \in \overline{K}_2, \quad \bar\Omega = \rho/\Omega \quad \text{so that} \quad \bar\Omega^7 = \rho^7/\Omega^7=\rho^7/\omega =\bar\omega. \]

By Kummer theory, the extension $K_2(\zeta, \Omega) / K_2(\zeta)$, is a cyclic extension of degree $7$.
Let $T=\Omega+\bar\Omega$, then
\[    K_2(\zeta_7) \subset K_2(\zeta_7, T) \subset K_2(\zeta_7, \Omega).\]
Since $\Omega$ is a linear
combination of $T = \Omega + \bar\Omega$ and its conjugate
$\zeta_7\Omega+ \zeta_7^{-1}\bar\Omega$, where coefficients belong
to $K_2(\zeta_7)$.  Thus, we indeed have
\[    K_2(\zeta_7, T) = K_2(\zeta_7, \Omega).\]
Also, on one hand \[    [K_2(\zeta_7, T) : K_2] = [K_2(\zeta_7, \Omega): K_2] = [K_2(\zeta_7, \Omega): K_2(\zeta_7)] [K_2(\zeta_7):K_2] =7[K_2(\zeta_7):K_2],\]
and on the other hand,\[    [K_2(\zeta_7, T) : K_2] = [K_2(\zeta_7, T) : K_2(T)]\cdot [K_2(T): K_2].\]
The index $[K_2(\zeta_7, T) : K_2(T)]$ is a divisor of $6$ implies that $7 \mid  [K_2(T): K_2].$   The fact that the polynomial \[ \prod_{i=0}^6 (t - \zeta_7^i \cdot \Omega - \zeta_7^{- i} \cdot \bar{\Omega}), \, \text{ has expansion } \, t^7 - 7 t^5 \rho  + 14 t^3 \rho^2 - 7 t \rho^3 -2 (\omega+ \bar\omega) \, \,  \text{ with coefficients in $K_2$},\] yields
\[  [K_2(T): K_2]= 7.\]
As $[K_1(T):K_1] \ge [K_2(T):K_2]$, this further implies
\[    [K_1(T):K_1] = 7.\]
Hence, we proved our claim $
[K(X, T): K(X)] = 7$.

Since $K_2(T)\subset K_2(\Omega)$,
the equality $[K_2(T): K_2]= 7$ implies \[ K(X, Y, T) = K(X, Y, W). \]

\end{proof}

%\subsection{Curves over $\mathbb{Q}$} \label{SS:polys3}
Follow the material in \cite[p245-246 Theorem 5.3.5]{HenriCohen}, let $K_1 =
\mathbb{Q}(X)$, and restrict the values of $\beta_i$'s to obtain a cyclic extensions of
degree $7$ of the field $K_2=\mathbb{Q}(X, Y)$.

First, we note any extension $K_2(\sqrt[7]{\xi})/K_2$
 can not be Galois when $\xi \in K_2^{\times}$.  Since $K_2 \cap \mathbb{Q}(\zeta_7) = \mathbb{Q}$, the extension $K_2(\zeta_7) / K_2$ is a cyclic extension of degree $6$.

Let $\xi \in K_2(\zeta_7)^{\times}$
and consider its $7$th root $\Xi$ such that $\Xi^7 = \xi. $
To study  the orbit of $\Xi$ under the Galois group of
$K_2(\zeta_7, \Xi) / K_2$, we let a conjugation $\sigma$ of
$K_2(\zeta_7, \Xi) / K_2$ such that
\[\zeta_7^{\sigma}  = \zeta_7^3. \]
We also let the conjugation $\tau$ of $K_2(\zeta_7, \Xi) /
K_2$ such that
\[ \zeta_7^{\tau} = \zeta_7,\quad \Xi^{\tau}=\zeta_7\Xi. \]
Then,\[    \Xi^{\tau^j \sigma^i} = (\zeta_7^j \Xi)^{\sigma^i} = \zeta_7^{3^ij} \Xi^{\sigma^i} .\]

Assume the extension $K_2(\zeta_7, \Xi)$ contains a cyclic
extension of degree $7$ of $K_2$. Then, the extension
$K_2(\zeta_7, \Xi)/K_2$ is abelian. Therefore, we must have
\[    \Xi^{\sigma^i \tau^j} = \Xi^{\tau^j \sigma^i} =  \zeta_7^{3^ij} \Xi^{\sigma^i} .\]
In particular,
\[    (\Xi^{2\sigma} \Xi)^{\tau} = \Xi^{2\sigma} \Xi.\]
This means
\[    \Xi^{2\sigma} \Xi \in K_2(\zeta_7), \, \text{ or equivalently } \,
    \xi^{2\sigma} \xi \in (K_2(\zeta_7)^{\times})^7.
\end{equation*}

On the other hand, every subextension of $K_2(\zeta_7, \Xi)/K_2$
is abelian, which suggests $
    K_2(\xi) = K_2(\zeta_7).$  Otherwise, $K_2(\Xi)/K_2(\xi)$ is non-Galois would
imply $K_2(\Xi)/K_2$ is non-abelian, contradicting our assumption.

Therefore,
\begin{equation*}\label{Non-degeneracy}
    \xi \notin (K_2(\zeta_7)^\times)^7.
\end{equation*}

Consider the sum
\[    \theta_j = \sum_{i=0}^5 \Xi^{\sigma^i \tau^j}
    =
    \sum_{i=0}^5 \zeta_7^{3^i j} \Xi^{\sigma^i}
    ,\quad (j=0, 1, ..., 6)
    .\]
Each $\theta_j$ is invariant under $\sigma$. Hence, we have either
$[K_2(\theta_0)/K_2] = 7$ or $1$. The latter possibility is
eliminated by the following matrix:
\[    (\zeta_7^{3^i j} - 1)_{0 \le i \le 5,\;\; 1 \le j \le 6} .\]

%\section{Hodge Structures}
%\subsection{{}} Recall the representation $\alpha = \chi _1 + \chi_2+\chi_3$ of the group $D_7$. This is defined over
%$\QQ$, in fact we have the following model for it: Let $V= \QQ(\zeta _7)$, a 6 -dimensional $\QQ$-vector space.
%We let $\sigma$ be $x \mapsto \zeta _7 x$, and $\tau (x) = \overline {x}$, complex conjugation. The identities for
%$D_7$ are clearly satisfied. One shows easily that the character of this is  $\chi _1 + \chi_2+\chi_3$.

%\subsection{{}} We are interested in Hodge structures
%$H = (H_{\ZZ},   H_{\C}= H^{1,0} \oplus H^{0,1}  )$ purely of type $(1, 0)$
 %and $(0, 1)$ with a $D_7$ action isomorphic to
%$\alpha \oplus\overline{\alpha} \cong \alpha \oplus\alpha$, and with a $D_7$-invariant polarization
%$(): H \otimes H \to \ZZ(-2)$. Equivalently, these are the $H^1(A)$ for abelian varieties $A$ of dimension 6
%with a polarization. Evidently, these abelian varieties will be of CM type with endomorphisms from the
%CM field $\QQ(\zeta _7)$. First observe that there is are principal polarizations. Namely, as
%$D_7$-space $H_{\QQ} = V^2 = \QQ(\zeta _7)^2$, and fixing a basis of the 2-dimensional $\QQ(\zeta _7)$-
%vector space $H_{\QQ}$, we can define
%\[(x, y) = . \]

\section{Appendix: explicit equations}
\label{S:explicit}
%\subsection{\rm{\!\!}}\label{SS:explicit1}

Recall from section \ref{S:constr} %\ref{SS:constr6},
we found equations for our curves that depended on parameters,
$u_1, u_2, u_3, u_4$. In fact, the equations are symmetric functions of these, so we can express
them in terms of the elementary symmetric functions. We use Mathematica to solve the equation system. Let
\[ \alpha = \sum_{i=1}^4 u_i,  \, \,  \beta=\sum_{i\neq j} u_iu_j,  \, \,  \gamma = \sum_{i \neq j \neq k} u_iu_ju_k,  \, \,  \delta = u_1u_2u_3u_4.\]

The general equation of the genus 3 curves is
$$ Y(u_1, u_2, u_3, u_4): z^7 - 7xz^5 + 14x^2z^3 - 7x^3z - 2h(x)=0$$
where

$h(x)= (\alpha^3\gamma^2\delta^4+\gamma^3\delta^4+\alpha^3\beta\delta^5-3\alpha^2\gamma\delta^5+
(-2\alpha^3\gamma^4\delta^2-2\gamma^5\delta^2+2\alpha^3\beta\gamma^2\delta^3+6\alpha^2\gamma^3\delta^3+
4\beta\gamma^3\delta^3+3\alpha^3\beta^2\delta^4-\alpha^4\gamma\delta^4-9\alpha^2\beta\gamma\delta^4-
3\alpha\gamma^2\delta^4+\alpha^3\delta^5)x+(\alpha^3\gamma^6+\gamma^7-3\alpha^3\beta\gamma^4\delta-
3\alpha^2\gamma^5\delta-4\beta\gamma^5\delta+4\alpha^3\beta^2\gamma^2\delta^2-2\alpha^4\gamma^3\delta^2+
6\alpha^2\beta\gamma^3\delta^2+6\beta^2\gamma^3\delta^2+2\alpha\gamma^4\delta^2+3\alpha^3\beta^3\delta^3-
8\alpha^4\beta\gamma\delta^3-9\alpha^2\beta^2\gamma\delta^3+14\alpha^3\gamma^2\delta^3-9\alpha\beta\gamma^2\delta^3+
3\gamma^3\delta^3+\alpha^5\delta^4+6\alpha^3\beta\delta^4-9\alpha^2\gamma\delta^4)x^2+(-3\alpha^3\beta^2\gamma^4+
3\alpha^4\gamma^5+3\alpha^2\beta\gamma^5-2\beta^2\gamma^5+\alpha\gamma^6+3\alpha^3\beta^3\gamma^2\delta+
3\alpha^2\beta^2\gamma^3\delta+4\beta^3\gamma^3\delta-15\alpha^3\gamma^4\delta-5\alpha\beta\gamma^4\delta-
\gamma^5\delta+\alpha^3\beta^4\delta^2-7\alpha^4\beta^2\gamma\delta^2-3\alpha^2\beta^3\gamma\delta^2+
14\alpha^3\beta\gamma^2\delta^2-9\alpha\beta^2\gamma^2\delta^2+19\alpha^2\gamma^3\delta^2+9\beta\gamma^3\delta^2-
\alpha^5\beta\delta^3+9\alpha^3\beta^2\delta^3+\alpha^4\gamma\delta^3-18\alpha^2\beta\gamma\delta^3-
9\alpha\gamma^2\delta^3+3\alpha^3\delta^4)x^3+(-3\alpha^4\beta^2\gamma^3+3\alpha^2\beta^3\gamma^3+\beta^4\gamma^3+
3\alpha^5\gamma^4-7\alpha\beta^2\gamma^4-\beta\gamma^5+3\alpha^5\beta\gamma^2\delta+3\alpha^3\beta^2\gamma^2\delta-
3\alpha\beta^3\gamma^2\delta-15\alpha^4\gamma^3\delta+14\alpha^2\beta\gamma^3\delta+9\beta^2\gamma^3\delta+
\alpha\gamma^4\delta-2\alpha^5\beta^2\delta^2+4\alpha^3\beta^3\delta^2+\alpha^6\gamma\delta^2-
5\alpha^4\beta\gamma\delta^2-9\alpha^2\beta^2\gamma\delta^2+19\alpha^3\gamma^2\delta^2-18\alpha\beta\gamma^2\delta^2+
3\gamma^3\delta^2-\alpha^5\delta^3+9\alpha^3\beta\delta^3-9\alpha^2\gamma\delta^3)x^4+(\alpha^6\gamma^3-
3\alpha^4\beta\gamma^3+4\alpha^2\beta^2\gamma^3+3\beta^3\gamma^3-2\alpha^3\gamma^4-8\alpha\beta\gamma^4+\gamma^5-
3\alpha^5\gamma^2\delta+6\alpha^3\beta\gamma^2\delta-9\alpha\beta^2\gamma^2\delta+14\alpha^2\gamma^3\delta+
6\beta\gamma^3\delta+\alpha^7\delta^2-4\alpha^5\beta\delta^2+6\alpha^3\beta^2\delta^2+2\alpha^4\gamma\delta^2-
9\alpha^2\beta\gamma\delta^2-9\alpha\gamma^2\delta^2+3\alpha^3\delta^3)x^5+(-2\alpha^4\gamma^3+2\alpha^2\beta\gamma^3+
3\beta^2\gamma^3-\alpha\gamma^4+6\alpha^3\gamma^2\delta-9\alpha\beta\gamma^2\delta+\gamma^3\delta-2\alpha^5\delta^2+
4\alpha^3\beta\delta^2-3\alpha^2\gamma\delta^2)x^6+(\alpha^2\gamma^3+\beta\gamma^3-3\alpha\gamma^2\delta+
\alpha^3\delta^2)x^7)/(2(-\alpha\beta\gamma+\gamma^2+\alpha^2\delta)^3).$

\quad

The equation genus 2 curve is
\begin{equation*}
S_6(x) = \sum_{i=0}^6 a_ix^i
\end{equation*}
with

$a_0= \beta ^2 \delta ^6 \alpha ^6+2 \beta  \gamma ^2 \delta ^5 \alpha ^6+\gamma ^4 \delta ^4 \alpha ^6-6 \beta  \gamma  \delta ^6 \alpha ^5-6 \gamma ^3 \delta ^5 \alpha ^5+9 \gamma
   ^2 \delta ^6 \alpha ^4+2 \beta  \gamma ^3 \delta ^5 \alpha ^3+2 \gamma ^5 \delta ^4 \alpha ^3-6 \gamma ^4 \delta ^5 \alpha ^2+\gamma ^6 \delta ^4, $

$\,$

$a_1= -2 \delta ^2 \gamma ^8-4 \alpha ^3 \delta ^2 \gamma ^7+12 \alpha ^2 \delta ^3 \gamma ^6+4 \beta  \delta ^3 \gamma ^6-2 \alpha ^6 \delta ^2 \gamma ^6-6 \alpha  \delta ^4 \gamma
   ^5+12 \alpha ^5 \delta ^3 \gamma ^5+4 \alpha ^3 \beta  \delta ^3 \gamma ^5-26 \alpha ^4 \delta ^4 \gamma ^4-18 \alpha ^2 \beta  \delta ^4 \gamma ^4+20 \alpha ^3 \delta ^5
   \gamma ^3-2 \alpha ^7 \delta ^4 \gamma ^3+6 \alpha ^3 \beta ^2 \delta ^4 \gamma ^3-6 \alpha ^5 \beta  \delta ^4 \gamma ^3+8 \alpha ^6 \delta ^5 \gamma ^2+12 \alpha ^4 \beta
   \delta ^5 \gamma ^2+4 \alpha ^6 \beta ^2 \delta ^4 \gamma ^2-6 \alpha ^5 \delta ^6 \gamma -12 \alpha ^5 \beta ^2 \delta ^5 \gamma -2 \alpha ^7 \beta  \delta ^5 \gamma +2
   \alpha ^6 \beta  \delta ^6+2 \alpha ^6 \beta ^3 \delta ^5 ,$

$\,$

$a_2=
 \gamma ^{10}+2 \alpha ^3 \gamma ^9+\alpha ^6 \gamma ^8-6 \alpha ^2 \delta  \gamma ^8-4 \beta  \delta  \gamma ^8+8 \alpha  \delta ^2 \gamma ^7-6 \alpha ^5 \delta  \gamma ^7-6
   \alpha ^3 \beta  \delta  \gamma ^7+2 \delta ^3 \gamma ^6+17 \alpha ^4 \delta ^2 \gamma ^6+6 \beta ^2 \delta ^2 \gamma ^6+12 \alpha ^2 \beta  \delta ^2 \gamma ^6-2 \alpha ^6
   \beta  \delta  \gamma ^6-10 \alpha ^3 \delta ^3 \gamma ^5-18 \alpha  \beta  \delta ^3 \gamma ^5+8 \alpha ^3 \beta ^2 \delta ^2 \gamma ^5+6 \alpha ^5 \beta  \delta ^2 \gamma
   ^5-3 \alpha ^2 \delta ^4 \gamma ^4+12 \alpha ^6 \delta ^3 \gamma ^4-18 \alpha ^2 \beta ^2 \delta ^3 \gamma ^4-22 \alpha ^4 \beta  \delta ^3 \gamma ^4+3 \alpha ^6 \beta ^2
   \delta ^2 \gamma ^4-34 \alpha ^5 \delta ^4 \gamma ^3+46 \alpha ^3 \beta  \delta ^4 \gamma ^3+6 \alpha ^3 \beta ^3 \delta ^3 \gamma ^3-18 \alpha ^5 \beta ^2 \delta ^3 \gamma
   ^3-12 \alpha ^7 \beta  \delta ^3 \gamma ^3+12 \alpha ^4 \delta ^5 \gamma ^2+3 \alpha ^8 \delta ^4 \gamma ^2-3 \alpha ^4 \beta ^2 \delta ^4 \gamma ^2+50 \alpha ^6 \beta
   \delta ^4 \gamma ^2+6 \alpha ^6 \beta ^3 \delta ^3 \gamma ^2-8 \alpha ^7 \delta ^5 \gamma -24 \alpha ^5 \beta  \delta ^5 \gamma -6 \alpha ^5 \beta ^3 \delta ^4 \gamma -10
   \alpha ^7 \beta ^2 \delta ^4 \gamma +\alpha ^6 \delta ^6+6 \alpha ^6 \beta ^2 \delta ^5+2 \alpha ^8 \beta  \delta ^5+\alpha ^6 \beta ^4 \delta ^4 ,$

$\,$

$a_3=
 -2 \gamma  \delta ^4 \alpha ^9+2 \delta ^5 \alpha ^8-2 \beta ^2 \delta ^4 \alpha ^8+6 \beta  \gamma ^2 \delta ^3 \alpha ^8+2 \gamma ^7 \alpha ^7-2 \beta  \gamma  \delta ^4
   \alpha ^7-6 \gamma ^3 \delta ^3 \alpha ^7-6 \beta ^2 \gamma ^3 \delta ^2 \alpha ^7+6 \beta  \gamma ^5 \delta  \alpha ^7-4 \beta ^2 \gamma ^6 \alpha ^6+6 \beta  \delta ^5
   \alpha ^6+4 \beta ^3 \delta ^4 \alpha ^6+4 \gamma ^2 \delta ^4 \alpha ^6+6 \beta ^2 \gamma ^2 \delta ^3 \alpha ^6-18 \beta  \gamma ^4 \delta ^2 \alpha ^6-20 \gamma ^6 \delta
    \alpha ^6+6 \beta  \gamma ^7 \alpha ^5-12 \gamma  \delta ^5 \alpha ^5-18 \beta ^2 \gamma  \delta ^4 \alpha ^5+46 \beta  \gamma ^3 \delta ^3 \alpha ^5+60 \gamma ^5 \delta ^2
   \alpha ^5+6 \beta ^3 \gamma ^3 \delta ^2 \alpha ^5+18 \beta ^2 \gamma ^5 \delta  \alpha ^5-6 \beta  \gamma ^2 \delta ^4 \alpha ^4-92 \gamma ^4 \delta ^3 \alpha ^4-6 \beta ^3
   \gamma ^2 \delta ^3 \alpha ^4-56 \beta ^2 \gamma ^4 \delta ^2 \alpha ^4-18 \beta  \gamma ^6 \delta  \alpha ^4-6 \beta ^2 \gamma ^7 \alpha ^3+40 \gamma ^3 \delta ^4 \alpha
   ^3+32 \beta ^2 \gamma ^3 \delta ^3 \alpha ^3+46 \beta  \gamma ^5 \delta ^2 \alpha ^3+2 \beta ^4 \gamma ^3 \delta ^2 \alpha ^3-6 \gamma ^7 \delta  \alpha ^3+6 \beta ^3 \gamma
   ^5 \delta  \alpha ^3+6 \beta  \gamma ^8 \alpha ^2-6 \beta  \gamma ^4 \delta ^3 \alpha ^2+4 \gamma ^6 \delta ^2 \alpha ^2-6 \beta ^3 \gamma ^4 \delta ^2 \alpha ^2+6 \beta ^2
   \gamma ^6 \delta  \alpha ^2-2 \gamma ^9 \alpha -12 \gamma ^5 \delta ^3 \alpha -18 \beta ^2 \gamma ^5 \delta ^2 \alpha -2 \beta  \gamma ^7 \delta  \alpha -2 \beta ^2 \gamma
   ^8+6 \beta  \gamma ^6 \delta ^2+2 \gamma ^8 \delta +4 \beta ^3 \gamma ^6 \delta , $

$\,$

$a_4=
 \delta ^4 \alpha ^{10}+2 \gamma ^3 \delta ^2 \alpha ^9+\gamma ^6 \alpha ^8-4 \beta  \delta ^4 \alpha ^8-6 \gamma ^2 \delta ^3 \alpha ^8+8 \gamma  \delta ^4 \alpha ^7-6 \beta
   \gamma ^3 \delta ^2 \alpha ^7-6 \gamma ^5 \delta  \alpha ^7-2 \beta  \gamma ^6 \alpha ^6+2 \delta ^5 \alpha ^6+6 \beta ^2 \delta ^4 \alpha ^6+12 \beta  \gamma ^2 \delta ^3
   \alpha ^6+17 \gamma ^4 \delta ^2 \alpha ^6-18 \beta  \gamma  \delta ^4 \alpha ^5-10 \gamma ^3 \delta ^3 \alpha ^5+8 \beta ^2 \gamma ^3 \delta ^2 \alpha ^5+6 \beta  \gamma ^5
   \delta  \alpha ^5+3 \beta ^2 \gamma ^6 \alpha ^4-3 \gamma ^2 \delta ^4 \alpha ^4-18 \beta ^2 \gamma ^2 \delta ^3 \alpha ^4-22 \beta  \gamma ^4 \delta ^2 \alpha ^4+12 \gamma
   ^6 \delta  \alpha ^4-12 \beta  \gamma ^7 \alpha ^3+46 \beta  \gamma ^3 \delta ^3 \alpha ^3-34 \gamma ^5 \delta ^2 \alpha ^3+6 \beta ^3 \gamma ^3 \delta ^2 \alpha ^3-18 \beta
   ^2 \gamma ^5 \delta  \alpha ^3+3 \gamma ^8 \alpha ^2+6 \beta ^3 \gamma ^6 \alpha ^2+12 \gamma ^4 \delta ^3 \alpha ^2-3 \beta ^2 \gamma ^4 \delta ^2 \alpha ^2+50 \beta
   \gamma ^6 \delta  \alpha ^2-10 \beta ^2 \gamma ^7 \alpha -24 \beta  \gamma ^5 \delta ^2 \alpha -8 \gamma ^7 \delta  \alpha -6 \beta ^3 \gamma ^5 \delta  \alpha +2 \beta
   \gamma ^8+\beta ^4 \gamma ^6+\gamma ^6 \delta ^2+6 \beta ^2 \gamma ^6 \delta , $

$\,$

$a_5= -2 \delta ^4 \alpha ^8-4 \gamma ^3 \delta ^2 \alpha ^7-2 \gamma ^6 \alpha ^6+4 \beta  \delta ^4 \alpha ^6+12 \gamma ^2 \delta ^3 \alpha ^6-6 \gamma  \delta ^4 \alpha ^5+4
   \beta  \gamma ^3 \delta ^2 \alpha ^5+12 \gamma ^5 \delta  \alpha ^5-18 \beta  \gamma ^2 \delta ^3 \alpha ^4-26 \gamma ^4 \delta ^2 \alpha ^4-2 \gamma ^7 \alpha ^3+20 \gamma
   ^3 \delta ^3 \alpha ^3+6 \beta ^2 \gamma ^3 \delta ^2 \alpha ^3-6 \beta  \gamma ^5 \delta  \alpha ^3+4 \beta ^2 \gamma ^6 \alpha ^2+12 \beta  \gamma ^4 \delta ^2 \alpha ^2+8
   \gamma ^6 \delta  \alpha ^2-2 \beta  \gamma ^7 \alpha -6 \gamma ^5 \delta ^2 \alpha -12 \beta ^2 \gamma ^5 \delta  \alpha +2 \beta ^3 \gamma ^6+2 \beta  \gamma ^6 \delta ,$

$\,$

$a_6=\delta ^4 \alpha ^6+2 \gamma ^3 \delta ^2 \alpha ^5+\gamma ^6 \alpha ^4-6 \gamma ^2 \delta ^3 \alpha ^4+2 \beta  \gamma ^3 \delta ^2 \alpha ^3-6 \gamma ^5 \delta  \alpha ^3+2
   \beta  \gamma ^6 \alpha ^2+9 \gamma ^4 \delta ^2 \alpha ^2-6 \beta  \gamma ^5 \delta  \alpha +\beta ^2 \gamma ^6 .$

% \subsection{Four 1-dimensional Families At $X(0,1,1,0)$.}  \label{SS:explicit2}

It turns out that it is hard to use a computer to get the canonical forms of our family of curves. But we can get the following families of curves with their canonical forms, that is, the quartic equations of the genus 3 curves.

Notation as the previous subsection. Consider the curve
\[Y(0,1,1,0)=z^7 - 7xz^5 + 14x^2z^3 - 7x^3z - 2\left(\frac{x^7}{2}+\frac{3 x^6}{2}+2 x^5-x^3+\frac{x^2}{2}\right)=0.\]
Using Magma, we can compute its canonical form as a quartic in the projective plane ${\mathbb P}^2_K$. Assume the coordinates in ${\mathbb P}^2_K$ are $X, Y$ and $Z$, we have the equation
\begin{equation*}\begin{split}
X^4 + 8X^3Z + 2X^2YZ + 25X^2Z^2 - XY^3 + 2XY^2Z + 8XYZ^2 +
36XZ^3 + Y^4 - 2Y^3Z  \\ + 5Y^2Z^2 + 9YZ^3 + 20Z^4 =0 \end{split}
\end{equation*}
Consider the family
$$Y(s,1,1,0)=z^7 - 7xz^5 + 14x^2z^3 - 7x^3z - 2h(s , 1, 1, 0)$$
where
\begin{equation*}\begin{split}  h(s , 1, 1, 0)= & -\frac{1}{2 (s-1)^3}\left(s^3+1\right) x^2+\left(s^2+1\right) x^7-\left(2 s^4-2
   s^2+s-3\right) x^6 + \left(3 s^5-3 s^4+3 s^2-7 s\right) x^4+ \\ &  \left(3
   s^4-3 s^3+3 s^2+s-2\right) x^3+\left(s^6-3 s^4-2 s^3+4 s^2-8
   s+4\right) x^5) .\end{split}\end{equation*}

The canonical model of this family in the projective plane ${\mathbb P}^2_K$ is

\begin{equation*}\begin{split}  &  S(s,X,Y,Z) = \frac{\left(2 s-2 s^2\right) X^3 Y}{s^3+1}+ \frac{\left(-s^3+2
   s^2-s\right) X^2 Y^2}{s^5+s^3+s^2+1}+ \frac{\left(6 s^4-6 s^3-10
   s^2+8 s+2\right) X^2 Y Z}{s^3+1}+\\ & \frac{\left(2 s^5-4 s^4-4 s^3+14
   s^2-10 s+2\right) X Y^2 Z}{s^5+s^3+s^2+1}+\frac{\left(-4 s^7+4
   s^5-4 s^4+7 s^3+6 s^2-s+8\right) X^3
   Z}{s^5+s^3+s^2+1}+\\ & \frac{\left(-6 s^6+6 s^5+20 s^4-16 s^3-20 s^2+8
   s+8\right) X Y Z^2}{s^3+1}+ \frac{\left(2 s^5-6 s^4+7 s^3-5 s^2+3
   s-1\right) X Y^3}{s^8+s^6+2 s^5+2 s^3+s^2+1}+ \\ & \frac{\left(s^4-4
   s^3+6 s^2-4 s+1\right) Y^4}{s^8+s^6+2 s^5+2
   s^3+s^2+1}+\frac{\left(6 s^9-18 s^7+6 s^6+3 s^5-24 s^4+20 s^3+15
   s^2-9 s+25\right) X^2 Z^2}{s^5+s^3+s^2+1}+\\ & \frac{\left(-2 s^7+6
   s^6-5 s^5+s^4-s^3-s^2+4 s-2\right) Y^3 Z}{s^8+s^6+2 s^5+2
   s^3+s^2+1}+\\ & \frac{\left(-4 s^{11}+20 s^9-4 s^8-27 s^7+26 s^6-5
   s^5-54 s^4+34 s^3+18 s^2-24 s+36\right) X
   Z^3}{s^5+s^3+s^2+1}+\\ & \frac{\left(-s^{10}+2 s^9+5 s^8-15 s^7+5
   s^6+19 s^5-23 s^4-2 s^3+25 s^2-20 s+5\right) Y^2 Z^2}{s^8+s^6+2
   s^5+2 s^3+s^2+1}+\\ & \frac{\left(s^{13}-7 s^{11}+s^{10}+17 s^9-9
   s^8-14 s^7+31 s^6-9 s^5-43 s^4+28 s^3+8 s^2-20 s+20\right)
   Z^4}{s^5+s^3+s^2+1}+\\ & \frac{\left(2 s^{13}-2 s^{12}-8 s^{11}+8
   s^{10}+6 s^9-8 s^8+8 s^7-9 s^5+7 s^4-10 s^3+2 s^2-5 s+9\right) Y
   Z^3}{s^8+s^6+2 s^5+2 s^3+s^2+1}+X^4=0.\end{split}\end{equation*}
We also have another three families of canonical models. For
$$Y(0,1+t,1,0) =z^7 - 7xz^5 + 14x^2z^3 - 7x^3z - 2h(0 , 1+t, 1, 0)$$ where
\begin{eqnarray*} & & h(0 , 1+t, 1, 0) \\
& = & \frac{1}{2} (t+1) x^7+\frac{3}{2} (t+1)^2 x^6+\frac{1}{2} \left(3
   (t+1)^3+1\right) x^5+  \frac{1}{2} \left((t+1)^4-t-1\right)
   x^4-(t+1)^2 x^3+\frac{x^2}{2},\end{eqnarray*}
the canonical form of the family $Y(0,1+t,1,0)$ is
\begin{equation*}\begin{split}   &T(t,X,Y,Z)= \frac{\left(24 t^3+72 t^2+72 t+25\right) X^2 Z^2}{t+1}+\left(32
   t^3+96 t^2+96 t+36\right) X Z^3+ \\ &\frac{\left(8 t^3+24 t^2+24
   t+9\right) Y Z^3}{t+1}+\left(16 t^4+64 t^3+96 t^2+68 t+20\right)
   Z^4+(8 t+8) X^3 Z+ \\& \frac{2 X Y^2 Z}{t+1}+(8 t+8) X Y
   Z^2+\frac{Y^4}{t+1}+(-2 t-2) Y^3 Z+X^4+2 X^2 Y Z-X Y^3+5 Y^2 Z^2=0.\end{split}\end{equation*}
For
$$Y(0,1,1+u,0)=z^7 - 7xz^5 + 14x^2z^3 - 7x^3z - 2h(0 , 1, 1+u, 0)$$ where
\begin{equation*}\begin{split}  h(0 , 1, 1+u, 0)  = & \frac{1}{2 (u+1)^6}((u+1)^3 x^7+3 (u+1)^3 x^6+\left((u+1)^5+3 (u+1)^3\right) x^5- \\ &\left((u+1)^5-(u+1)^3\right) x^4-2 (u+1)^5 x^3+(u+1)^7 x^2),\end{split}\end{equation*}
the canonical form of the family $Y(0,1,1+u,0)$ is
\begin{equation*}\begin{split}   &U(u,X,Y,Z)= \left(u^2+2 u+25\right) X^2 Z^2+\left(2 u^2+4 u+2\right) X Y^2 Z+\left(4 u^2+8 u+36\right) X Z^3+ \\& \left(u^2+2 u+1\right) Y^4+\left(5 u^2+10 u+5\right) Y^2 Z^2+\left(4 u^2+8 u+20\right) Z^4+\left(u^3+3 u^2+11 u+9\right) Y Z^3+ \\ &(2 u+2) X^2 Y Z+(-u-1) X Y^3+(8 u+8) X Y Z^2+(-2 u-2) Y^3 Z+X^4+8 X^3 Z=0.\end{split}\end{equation*}

For
$$Y(0,1,1,v)=z^7 - 7xz^5 + 14x^2z^3 - 7x^3z - 2h(0 , 1, 1, v)$$ where
\begin{equation*}\begin{split}  h(0 , 1, 1, v)= & \frac{v^4}{2}+\frac{3}{2} \left(v^2+3 v\right) x^4+\frac{1}{2}
   \left(9 v^2+3 v-2\right) x^3+\frac{1}{2} \left(3 v^3+6 v^2-4
   v+1\right) x^2 + \\ &\left(2 v^3-v^2\right) x+\frac{1}{2} (v+3) x^6+(3
   v+2) x^5+\frac{x^7}{2},\end{split}\end{equation*}
the canonical form of the family $Y(0,1,1,v)$ is
\begin{equation*}\begin{split}   &V(v,X,Y,Z)= (v+8) X^3 Z-(4 v+2) X^2 Y^5 Z+v X^2 Y^2+(6 v+25) X^2 Z^2+(8-2 v) X Y
   Z^2 + \\&(12 v+36) X Z^3+(4 v+5) Y^2 Z^2+(9-4 v) Y Z^3+(9 v+20)
   Z^4+X^4+2 X^2 Y Z+Y^4-2 Y^3 Z=0.\end{split}\end{equation*}

\end{document}